%% October  17, 2016
\documentclass[11pt,a4paper]{article}

\usepackage{amsmath, amsfonts, amssymb}

\def\be#1\ee{\begin{equation}#1\end{equation}}

\newtheorem{thm}{Theorem}[section]

\newtheorem{prop}[thm]{Proposition}

\newtheorem{example}[thm]{Example}
\newtheorem{rem}[thm]{Remark}

\DeclareMathOperator{\Var}{Var}

%Probability measure and integral:

%%%%%%%%%%%%%% Bbb characters
%%%%%%%%%%%%%% Real numbers
\def\R{\mathbb{R}}
%%%%%%%%%%%%%% Expectation
\def\E{\mathbb{E}\,}
%%%%%%%%%%%%%% Probability

%%%%%%%%%%%%%% Integers
\def\Z{{\mathbb Z}}
%%%%%%%%%%%%%% Natural numbers
\def\N{{\mathbb N}}
%%%%%%%%%%%%%%

%%%%%%%%%%%%%%%% Special symbols
%%%%%%%%%%%%%% Exponential

%%%%%%%%%%%%%% Differentiation

%%%%%%%%%%%%%%

\newenvironment{proof}[1][] {\noindent {\bf Proof#1:} }{\hspace*{\fill}$\square$\medskip\par}

             % indicator
\def\span#1{\overline{{\rm span}}\{#1\}}    % closed linear span

\def\al{{\alpha}}
\def\AA{{\mathcal A}}
\def\b{{\beta}}
\def\C{{\mathbb C}}

\def\cov{\textrm{Cov}}
\def\D{{\mathbb D}}
\def\DD{{\mathcal D}}

\newcommand{\eps}{\varepsilon}
\def\HH{{\mathcal H}}

\def\LL{{\mathcal L}}

\def\SS{{\mathcal S}}

\def\W{{\mathcal W}}

\def\B{{B}}
\def\K{{K}}
\def\V{{V}}
\def\X{{X}}
\def\tX{\widetilde{\X}}

\def\HTO{H^\circ}
\def\LLO{\LL^\circ}

\def \=L{\ {\buildrel\hbox{\scriptsize d }\over =}\ }

\begin{document}

\title{\bf Some extensions of linear approximation and prediction
problems for stationary processes}
\author{
    Ildar Ibragimov\footnote{
    St.Petersburg Department of V.A.Steklov Institute of Mathematics of Russian
    Academy of Science, Fontanka embankment, 27, St.Petersburg, 191023, Russia,
    and St.Petersburg State University, 7/9 Universitetskaya nab., St.Petersburg,
   199034 Russia email {\tt ibr32@pdmi.ras.ru}}
    \and
    Zakhar Kabluchko\footnote{
    M\"unster University, Orl\'eans-Ring 10, 48149 M\"unster, Germany,
    email \ {\tt zakhar.kabluchko@uni-muenster.de}}
    \and
   Mikhail Lifshits\footnote{
   St.Petersburg State University, 7/9 Universitetskaya nab., St.Petersburg,
   199034 Russia, email {\tt mikhail@lifshits.org} and MAI, Link\"oping
   University.}
}
\date{\today}

\maketitle

\begin{abstract}
Let $(\B(t))_{t\in \Theta}$ with $\Theta=\Z$ or $\Theta=\R$ be a wide sense
stationary process with  discrete or continuous time. The classical linear prediction
problem consists of finding an element in $\span{B(s),s\le t}$ providing the best
possible mean square approximation to the variable $\B(\tau)$ with $\tau>t$.

In this article we investigate this and some other similar problems where, in
addition to prediction quality, optimization takes into account other features
of the objects we search for. One of the most motivating examples of this kind
is an approximation of a stationary process $\B$ by a stationary differentiable process
$\X$ taking into account the kinetic energy that $\X$ spends in its
approximation efforts.
\end{abstract}
\vskip 1cm

\noindent
\textbf{2010 AMS Mathematics Subject Classification:}
Primary: 60G10;  Secondary: 60G15, 49J40, 41A00.
%% 60G10 - Stationary proc., 60G15 - Gaussian Proc, 49J40 Variational methods,
%% 41A00 - approximations.
\bigskip

\noindent
\textbf{Key words and phrases:}
energy saving approximation, interpolation, prediction, wide sense stationary process.
\vfill

\newpage

\section{Introduction and problem setting}

\subsection{Motivating example}
Let $(\B(t))_{t\in \Theta}$ with $\Theta=\Z$ or $\Theta=\R$
be a wide sense stationary process with discrete or continuous
time. The classical linear prediction problem consists of
finding  an element in $\span{B(s),s\le t}$
providing the best possible mean square approximation to
the variable $\B(\tau)$ with $\tau>t$, see \cite{Kolm} and
\cite{Doob1, Doob2, GS, Roz, Yagl, W}.

Below we investigate this and some other similar problems where, in
addition to prediction quality, optimization takes into account
other features of the objects we search for, such as the smoothness
properties of approximation processes.

Here and elsewhere in the article $\span{\cdot}$ stands for the closed
linear span of a set in a Hilbert space.
All mentioned processes are assumed to be complex valued and all Hilbert spaces
are assumed to be complex.

\begin{example} \label{ex:kinetic} {\rm (Approximation saving kinetic
energy, \cite{KabLi}). By the instant {\it kinetic energy} of a process
$(\X(t))_{t\in \R}$ we understand  just its squared derivative
$|\X'(t)|^2$. It is more than natural to search for an approximation
of a given stationary process $(\B(t))_{t\in \R}$ by a differentiable
stationary process $(\X(t))_{t\in \R}$ taking
into account the kinetic energy that $\X$ spends in its approximation
efforts. The goals of the approximation quality and energy saving may
be naturally combined with averaging in time by minimization of the
functional
\[
   \lim_{N\to\infty} \frac 1N\ \int_0^N \left[ |\X(t)-\B(t)|^2 +
   \al^2|\X'(t)|^2\right] dt.
\]
Here $\al>0$ is a fixed scaling regularization parameter balancing the quality
of approximation and the spent energy.

If, additionally, the process $\X(t)-\B(t)$ and the derivative $\X'(t)$
are stationary processes in the strict sense, in many situations
ergodic theorem applies and the limit above is equal to
$\E |\X(0)-\B(0)|^2 + \al^2 \E |\X'(0)|^2$.

Therefore, we  may simplify our task to solving the problem
\be \label{EE0}
   \E |\X(0)-\B(0)|^2 + \al^2 \E |\X'(0)|^2 \to \min,
\ee
and setting aside ergodicity issues.

The problem \eqref{EE0} makes sense either in a simpler
{\it linear non-adaptive setting}, i.e. with
\[
   \X(t) \in \span{\B(s), s\in \R},  \quad t\in \R,
\]
or in {\it linear adaptive setting} by requiring additionally
\[
   \X(t) \in \span{\B(s), s\le t},  \quad t\in \R.
\]
In other words, this means that we only allow approximations based on
the current and past values of $\B$.
}
\end{example}
\bigskip

Let us start with a basic notation. Let $(\B(t))_{t\in \Theta}$ be
a complex-valued random process satisfying
$\E\B(t)=0$, $\E|\B(t)|^2<\infty$ for all $t\in \Theta$.

Consider $H:=\span{\B(t), t\in \Theta}$ as a Hilbert space equipped
with the scalar product $(\xi,\eta)=\E (\xi\overline{\eta})$. For
$T\subset \Theta$ let $H(T):= \span{\B(t), t\in T}$.

Furthermore, let $L$ be a linear operator with values in $H$ and
defined on a linear subspace $\DD(L)\subset H$. For a fixed
$\tau\in \Theta$, consider the extremal problem
\be\label{A}
   \E|Y-\B(\tau)|^2 + \E|L(Y)|^2 \to \min,
\ee
where the minimum is taken over all $Y\in H(T) \bigcap \DD(L)$.
The first term in the sum describes approximation, prediction, or
interpolation quality while the second term stands for additional
properties of the object we are searching for, e.g. for the
smoothness of the approximating process.

This is the most general form of the problem we are interested in.
Below we specify the class of the considered processes to
one-parametric wide sense stationary processes with discrete or continuous
time, introduce appropriate class of operators $L$ and
explain in Subsection \ref{ss:kin_as_L} why Example \ref{ex:kinetic}
is a special case of problem \eqref{A}.

\subsection{Spectral representation: brief reminder}

Let now $(\B(t))_{t\in \Theta}$ be the main object of our investigation
-- a centered wide sense stationary random process with univariate discrete
($\Theta=\Z$) or continuous ($\Theta=\R$) time. In case of continuous
time we additionally assume that $\B$ is mean square continuous.

Here and elsewhere we assume that all random variables under consideration
are centered. In particular, $\E \B(t)=0$ for all $t\in \Theta$.

By Khinchin theorem, the covariance function
\[
   \K(t) := \E \B(t)\overline{\B(0)}
\]
admits the spectral representation
\[
   \K(t) := \int e^{itu} \mu(du).
\]
Here and in the sequel integration in similar integrals is performed
over the interval $[-\pi,\pi)$ in case of the discrete time
processes and over real line $\R$ in case of the continuous time processes.
The finite measure $\mu$ on $[-\pi,\pi)$ or on $\R$,
respectively, is the spectral measure of the process $\B$. The process
$\B$ itself admits the spectral representation as a stochastic integral
\be \label{specBs}
   \B(t) = \int e^{itu} \W(du)
\ee
where $\W$ is an orthogonal random measure on $[-\pi,\pi)$ or on $\R$,
respectively, with $\E|\W(A)|^2=\mu(A)$.

Let
\[
   \LL = L_2(\mu)
       =\left\{\phi:\ \int |\phi(u)|^2 \mu(du) <\infty \right\}
\]
be equipped with the usual scalar product
\[
   \left(\phi, \psi \right)_\LL
   = \int \phi(u)\, \overline{\psi(u)}\, \mu(du).
\]
Recall that for any
\[
  \xi=  \int \phi(u)\, \W(du), \qquad \eta=  \int \psi(u)\, \W(du)
\]
it is true that
\[
  \E \xi\overline{\eta} = \int \phi(u)\, \overline{\psi(u)}\, \mu(du)
  =   \left(\phi, \psi \right)_\LL.
\]
It follows that the correspondence $\B(t) \rightleftarrows e^{itu}$
extends to the linear isometry between $H$ and the closed linear
span of the exponents in $\LL$. Actually, the latter span coincides
with entire space $\LL$, cf.\,\cite[Section 1.9]{GS}, and we obtain a linear
isometry between $H$ and $\LL$ provided by stochastic integral.
In other words, every element $\xi$ of Hilbert space $H$ can be
represented as a stochastic integral
\be \label{specxi}
   \xi = \int \phi_\xi(u) \W(du)
\ee
with some complex valued function $\phi_\xi\in\LL$, and every
random variable $\xi$ admitting the representation \eqref{specxi}
belongs to $H$.
\bigskip

An analogous theory exists for processes with wide sense
stationary increments.
Let $\B$ be such process with zero mean (for continuous time we
additionally assume mean square continuity). Similarly to
\eqref{specBs}, the process $\B$ admits a spectral representation
\[
    \B(t) = \B_0 + \B_1 t + \int (e^{itu}-1) \W(du)
\]
where $\W(du)$ is an orthogonal random measure controlled by the spectral
measure $\mu$ and $\B_0,\B_1$ are centered random variables uncorrelated
with $\W$, see \cite[p.213]{Yagl}. Notice that in case of processes with
wide sense stationary increments the spectral measure $\mu$ need not be finite but
it must satisfy $\mu\{0\}=0$ and L\'evy's integrability condition
\[
   \int_{-\pi}^\pi u^2\, \mu(du) < \infty
\]
for discrete time and
\[
   \int_\R \min\{u^2,1\} \mu(du) < \infty
\]
for continuous time.

In the following we let $B_0=B_1=0$ because for prediction problems
we handle here the finite rank part is uninteresting. We also do not
loose any interesting example with this restriction.  Therefore, we
consider the processes
\[
    \B(t) = \int (e^{itu}-1) \W(du).
\]

\subsection{Probabilistic problem setting}

The operators $L$ we are going to handle are those of the form
\be \label{L}
   L\xi = L\left( \int \phi_\xi(u) \W(du) \right)
   := \int \ell(u) \phi_\xi(u) \W(du),
\ee
where $\ell$ is a measurable function on $\R$ or on $[-\pi,\pi)$,
respectively. The domain $\DD(L)$ consists of $\xi$ such that
\[
  \E|L\xi|^2 = ||L\xi||_H^2
  = \int |\ell(u) \phi_\xi(u) |^2 \mu(du)<\infty.
\]
Such operators are often called {\it linear filters} while the
function $\ell$ is called the {\it frequency characteristic} of
a filter.

Below we consider problem \eqref{A} applied to wide sense stationary processes
with discrete or continuous time and operators $L$ from \eqref{L}.
For the space $H(T)$ we consider a variety of choices. Most
typically, we take $H(T)=H:=\span{\B(s), -\infty <s<\infty}$, the
space generated by all variables, or
$H(T)=H_t:=H((-\infty,t])= \span{\B(s),s\le t}$, the space
generated by the past of the process, or
$H(T)=\HTO_t:=\span{\B(s),|s|\ge t}$.

In problem \eqref{A}, we take the value of $\B$ at some point $\tau$ as
a subject of approximation. When $\B$ is a wide sense stationary process,
we may take $\tau=0$ without loss of generality.

Therefore, three following variations of problem \eqref{A} are
considered below.

Problem I (approximation):
\[
   \E |Y-\B(0)|^2+ \E|LY|^2 \to \min, \qquad Y\in H.
\]

Problem II (prediction):
\[
    \E |Y-\B(0)|^2+ \E|LY|^2 \to \min, \qquad Y\in H_t.
\]

Problem III (interpolation):
\[
    \E |Y-\B(0)|^2+ \E|LY|^2 \to \min, \qquad Y\in \HTO_t.
\]

Notice that, due to the presence of $L$, Problems II and III represent
an extension of the classical prediction and interpolation problems.
As for Problem I, once $L$ is omitted, it is trivial. In our setting
it is also easy but provides non-trivial results (even in the simplest
cases) and therefore is sufficiently interesting.

Sometimes we call the setting of Problem II adaptive, because the best
approximation is based on (adapted to) the  known past values of the
process. Opposite to this, the setting of Problem I is called
non-adaptive.

In the classical case, i.e. with $L=0$, Problem II, as stated here, is
non-trivial only for negative $t$. However, in presence of $L$ it
makes sense for arbitrary $t$.
\medskip

\subsection{Analytic problem setting}

Due to the spectral representation \eqref{specxi} problems I -- III
admit the following analytic setting.

Problem I$'$:
\be \label{P1}
   \int |\psi(u)-1|^2 \mu(du) + \int |\ell(u)\psi(u)|^2 \mu(du)
   \to \min, \qquad \psi\in \LL.
\ee

Problem II$'$:
\be \label{P2}
   \int |\psi(u)-1|^2 \mu(du) + \int |\ell(u)\psi(u)|^2 \mu(du) \to \min,
   \  \psi\in\span{e^{isu}, s\le t}.
\ee

Problem III$'$:
\be \label{P3}
   \int\! |\psi(u)-1|^2 \mu(du)\!
   +\! \int\! |\ell(u)\psi(u)|^2 \mu(du) \to \min,
   \  \psi\in\span{e^{isu}, |s|\ge t}.
\ee
The spans in Problems II$'$ and III$'$ are taken in $\LL$.

\subsection{Energy saving approximation as a special case of
extended prediction problem}
\label{ss:kin_as_L}

Consider the setting of Example \ref{ex:kinetic}:
given a zero mean wide sense stationary process $B=(\B(t))_{t\in\R}$
with spectral representation \eqref{specBs}, the problem is to
minimize the functional
\[
  \E |\X(0)-\B(0)|^2 + \al^2 \E |\X'(0)|^2
\]
over all mean square differentiable processes $X=(\X(t))_{t\in\R}$ such
that the processes $\X$ and $\B$ are jointly wide sense stationary.
The latter means that each of them is wide sense stationary and also the
cross-covariance $\E\X(t)\overline{\B(s)}$ depends only on $t-s$.

First of all, we show that while solving this minimization problem
one may only consider approximating processes of special type,
namely,
\be \label{psistat}
  \tX(t)=\int e^{itu} \psi(u)\W(du), \qquad \psi\in \LL.
\ee
Indeed, for arbitrary $\X$, we may decompose its initial value as
$\X(0)=\X^\bot +\tX(0)$ with $\tX(0)\in H$, $\X^\bot$ orthogonal to
$H$. By representation \eqref{specxi} there exists $\psi\in\LL$ such
that
\[
   \tX(0)= \int  \psi(u)\, \W(du).
\]
For this $\psi$ define the process $\tX(t)$ by \eqref{psistat}.
We show that the process $\tX$ is at least as good as $\X$ in the
sense of \eqref{EE0}.

Due to the joint wide sense stationarity, for any $s,t$ we have
\begin{eqnarray*}
  \E \X(t)\overline{\B(s)}&=& \E \X(0)\overline{\B(s-t)}
  =\E \tX(0)\overline{\B(s-t)}
  =\int \psi(u)e^{i(t-s)u} \mu(du);
\\
  \E \tX(t)\overline{\B(s)}&=& \int [e^{itu}\psi(u)]e^{-isu}\mu(du)
  = \int \psi(u)e^{i(t-s)u} \mu(du).
\end{eqnarray*}
It follows that $\X(t)-\tX(t)$ is orthogonal to $\B(s)$ for each $s$,
hence, it is orthogonal to $H$. Furthermore, it is easy to show
that if $\X$ is mean square differentiable then so are its components
$\tX$ and $\X-\tX$. For their derivatives, we know that
\[
   \tX'(t)= \int e^{itu}(iu)\,\psi(u)\, \W(du) \in H
\]
and $(\X-\tX)'(t)$ is orthogonal to $H$. Hence,
\begin{eqnarray*}
   &&\E |\X(0)-\B(0)|^2 + \al^2 \E |\X'(0)|^2
\\
   &=& \E |(\X(0)-\tX(0))+ (\tX(0)-\B(0))|^2
       +  \al^2 \E |(\X'(0)-\tX'(0))+ \tX'(0)|^2
\\
   &=& \E |\X(0)-\tX(0)|^2 +\E |\tX(0)-\B(0)|^2
\\
   &&    +  \al^2 \E |(\X'-\tX')(0)|^2   +  \al^2 \E |(\tX'(0)|^2
 \\
   &\ge&  \E  |\tX(0)-\B(0)|^2 +  \al^2 \E |\tX'(0)|^2.
\end{eqnarray*}
Therefore, $\tX$ is at least as good for \eqref{EE0}, as $\X$.

Finally, notice that for the processes defined by \eqref{psistat}
the expression in \eqref{EE0} is equal to
\[
   \int |\psi(u)-1|^2 \mu(du) + \int |\ell(u)\psi(u)|^2 \mu(du)
\]
with $\ell(u)=\al iu$, exactly as in the analytical versions of our
problems \eqref{P1} and \eqref{P2}. In the non-adaptive version
of the approximation problem we have to optimize over $\LL$, as
in \eqref{P1}, while for adaptive version the requirement
$\tX\in H_t$ for all $t$ is satisfied iff
$\psi\in\span{e^{isu}, s\le 0}$, as in \eqref{P2}.

One may consider other types of energy, e.g. based on higher
order derivatives of $\X$. This option leads to the same problems
with arbitrary polynomials $\ell$.

For discrete time case it is natural to replace the derivative
$\X'$ by the difference $\X(1)-\X(0)$. Then we obtain the same
problem with $\ell(u)=\al(e^{iu}-1)$, for $u\in[-\pi,\pi)$.

Examples of optimal non-adaptive energy saving approximation are given
in Section \ref{s:ex} below.

One may also consider the energy saving approximation for the
processes with wide sense stationary increments. Consider such a process
$\B$ and its approximation $\X$ such that $(\X(t),\B(t))_{t\in \Theta}$ with
$\Theta=\Z$ or $\Theta=\R$ is a
two-dimensional process with wide sense stationary increments and
$(\X(t)-\B(t))_{t\in \Theta}$ is a wide sense stationary process.
Since $\B(0)=0$, the analogue of \eqref{EE0} is
\be \label{EE0si}
  \E |\X(0)|^2+ \al^2 \E|\X'(0)|^2 \to \min.
\ee
Similarly to the case of  wide sense stationary processes one can show that,
analogously to \eqref{psistat}, it is sufficient only to consider
approximating processes of the special form
\[
  \tX(t):=\int \left( e^{itu} \psi(u)-1\right) \W(du),
  \qquad \psi-1\in \LL.
\]
Then problem \eqref{EE0si} takes familiar analytical form
\[
  \int |\psi(u)-1|^2 \mu (du) +  \int |\psi(u) (iu)|^2 \mu (du)
  \to\min
\]
with requirements $\psi-1\in \LL$ for non-adaptive setting and
\[
   e^{itu}\psi(u)-1\in \span{e^{isu}-1, s\le t}
\]
in adaptive setting. The latter may be simplified
to
\[
   \psi-1\in \span{e^{isu}-1, s\le 0}.
\]
\bigskip

\section{Abstract Hilbert space setting}

The basic matters about our problems such as the existence of
the solution or its uniqueness are easier to handle in a more
abstract setting.
A formal extension of problem \eqref{A} looks as follows. Let
$H$ be a separable Hilbert space with the corresponding scalar
product $(\cdot,\cdot)$ and norm $||\cdot||$. Let $L$ be
a linear operator taking values in $H$ and defined on a linear
subspace $\DD(L)\subset H$. Consider a problem
\be \label{A1}
  G(y):= ||y-x||^2 +||L y||^2 \to \min.
\ee
Here $x$ is a given element of $H$ and minimum is taken over all
$y\in H_0 \bigcap \DD(L)$ where $H_0\subset H$ is a given closed
linear subspace.

The following results are probably well known, yet for
completeness we give their proofs in Section \ref{s:HSproofs}.

\begin{prop}\label{p:exist}
    If $L$ is a closed operator
%%and $H_0 \bigcap \DD(L)$ is non-empty,
then the problem \eqref{A1} has a solution $\xi\in H_0 \bigcap \DD(L)$.
\end{prop}

\begin{prop}\label{p:uniq}
    The problem \eqref{A1} has at most one solution.
\end{prop}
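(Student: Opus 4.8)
The plan is to show that the functional $G$ in \eqref{A1} is strictly convex on its (convex) feasible set, so that a minimizer, if it exists, is necessarily unique. The cleanest route is through the parallelogram law, which quantifies strict convexity in a way tailored to sums of squared Hilbert norms. Observe first that the feasible set $H_0\cap\DD(L)$ is a linear subspace: $H_0$ is a closed linear subspace by assumption and $\DD(L)$ is a linear subspace by the definition of $L$. In particular it is convex and stable under taking midpoints, and on it $L$ acts linearly, so that $L\left(\tfrac12(y_1+y_2)\right)=\tfrac12(Ly_1+Ly_2)$ whenever $y_1,y_2\in\DD(L)$.

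Concretely, I would suppose that $y_1,y_2\in H_0\cap\DD(L)$ are both solutions, so that $G(y_1)=G(y_2)=m$ is the common minimal value, and consider the midpoint $y:=\tfrac12(y_1+y_2)$, which again lies in $H_0\cap\DD(L)$. Applying the parallelogram identity $||a+b||^2+||a-b||^2=2||a||^2+2||b||^2$ to the pair $(a,b)=(y_1-x,\,y_2-x)$ yields
\[
   ||y-x||^2=\tfrac12||y_1-x||^2+\tfrac12||y_2-x||^2-\tfrac14||y_1-y_2||^2,
\]
and applying it to the pair $(Ly_1,Ly_2)$, using linearity of $L$, yields
\[
   ||Ly||^2=\tfrac12||Ly_1||^2+\tfrac12||Ly_2||^2-\tfrac14||L(y_1-y_2)||^2.
\]
Adding these two identities and recalling the definition of $G$ gives
\[
   G(y)=\tfrac12 G(y_1)+\tfrac12 G(y_2)
        -\tfrac14\bigl(||y_1-y_2||^2+||L(y_1-y_2)||^2\bigr)
       =m-\tfrac14\bigl(||y_1-y_2||^2+||L(y_1-y_2)||^2\bigr).
\]

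Since $y$ is feasible, optimality forces $G(y)\ge m$, whence the nonnegative quantity $||y_1-y_2||^2+||L(y_1-y_2)||^2$ must vanish; in particular $||y_1-y_2||=0$, i.e.\ $y_1=y_2$, which is the claim. I do not expect any genuine obstacle here: the only points requiring care are the routine verifications that the midpoint is feasible and that $L$ may be pulled through it, both of which rest solely on the linearity of $\DD(L)$ and of $L$. Note that, in contrast to Proposition \ref{p:exist}, closedness of $L$ plays no role in uniqueness. Equivalently, one could phrase the argument by equipping $\DD(L)$ with the graph inner product $(y,z)+(Ly,Lz)$, under which $G(y)$ is a squared norm minus a real-linear term plus a constant, hence strictly convex; the parallelogram computation above is exactly the explicit form of that strict convexity.
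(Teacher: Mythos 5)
Your proof is correct and is essentially the paper's own argument: both apply the parallelogram identity to the midpoint of two putative minimizers to obtain $G\bigl(\tfrac12(y_1+y_2)\bigr)=m-\tfrac14\bigl(\|y_1-y_2\|^2+\|L(y_1-y_2)\|^2\bigr)$ and conclude $y_1=y_2$ from optimality. The only difference is cosmetic (you argue directly from $G\ge m$ where the paper phrases it as a contradiction), and your remark that closedness of $L$ is not needed matches the paper's own observation.
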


\begin{rem} {\rm
   Unlike Proposition $\ref{p:exist}$, the assertion of
   Proposition $\ref{p:uniq}$ holds without additional assumptions
   on the operator $L$.
}
\end{rem}

\begin{prop} \label{p:LL}
   Assume that in problem \eqref{A1}  we have $H_0=H$ and $L$ is
   a closed operator with the domain dense in $H$.
   Then the unique solution of  \eqref{A1} exists and is given by the formula
\[
   \xi=(I+L^*L)^{-1} x,
\]
where $I:H\to H$ is the identity operator.
\end{prop}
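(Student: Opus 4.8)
The existence and uniqueness of a minimizer are already guaranteed by Propositions~\ref{p:exist} and~\ref{p:uniq} (since $L$ is closed, Proposition~\ref{p:exist} applies with $H_0=H$), so the entire task is to identify that minimizer with $(I+L^*L)^{-1}x$. The plan is to characterize the minimizer by a first-order optimality (normal) equation and then recognize that equation as the operator identity $(I+L^*L)\xi=x$.

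The functional-analytic backbone is von Neumann's theorem: since $L$ is densely defined and closed, the adjoint $L^*$ is well defined, the operator $L^*L$ is self-adjoint and non-negative on its domain $\DD(L^*L)$, and $I+L^*L$ is a bijection of $\DD(L^*L)$ onto $H$ whose inverse is a bounded operator on all of $H$ (of norm at most $1$). In particular $\DD(L^*L)\subset\DD(L)$, so $\xi:=(I+L^*L)^{-1}x$ is a legitimate competitor in \eqref{A1}, and by construction $\xi+L^*L\xi=x$, i.e. $L\xi\in\DD(L^*)$ and $L^*(L\xi)=x-\xi$.

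I would then verify optimality directly. By the defining property of the adjoint applied to $L\xi\in\DD(L^*)$, the identity $L^*(L\xi)=x-\xi$ is equivalent to the normal equation
\[
  (L\xi,Lh)=(x-\xi,h)\qquad\text{for all }h\in\DD(L).
\]
Given any competitor $y\in\DD(L)$, set $h:=y-\xi\in\DD(L)$ and expand
\[
  G(y)=\|\xi-x\|^2+\|L\xi\|^2
    +2\,\mathrm{Re}\bigl[(\xi-x,h)+(L\xi,Lh)\bigr]
    +\|h\|^2+\|Lh\|^2 .
\]
The bracketed cross term vanishes by the normal equation, leaving $G(y)=G(\xi)+\|h\|^2+\|Lh\|^2\ge G(\xi)$, with equality only when $h=0$. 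This simultaneously shows that $\xi=(I+L^*L)^{-1}x$ is a minimizer (and reproves uniqueness). One may equally run the argument in reverse: perturbing the known minimizer as $\xi+th$ with $t\in\C$ and $h\in\DD(L)$ and forcing the first-order term in $t$ to vanish yields the same normal equation, hence $(I+L^*L)\xi=x$, and the inverse furnished by von Neumann's theorem gives the formula.

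The routine part is the Hilbert-space expansion of $G$; the only genuine obstacle is the operator theory in the second paragraph. Proving that $L^*L$ is self-adjoint and that $I+L^*L$ is boundedly invertible on the whole of $H$ uses both hypotheses essentially: density of $\DD(L)$ to make sense of $L^*$, and closedness of $L$ to obtain self-adjointness of $L^*L$ together with the bounded, everywhere-defined inverse. If one prefers to avoid quoting von Neumann's theorem, the invertibility can be recovered from the problem itself: the form $a(h,k):=(h,k)+(Lh,Lk)$ is a bounded, coercive sesquilinear form on the Hilbert space $\DD(L)$ equipped with the (complete, by closedness) graph norm, so the Lax--Milgram/Riesz representation theorem produces for each $x$ a unique $\xi\in\DD(L)$ solving the normal equation, and a short check identifies this $\xi$ with $(I+L^*L)^{-1}x$.
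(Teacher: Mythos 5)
Your proof is correct, and it reaches the formula by a somewhat different route than the paper. Both arguments lean on the same operator-theoretic input (von Neumann's theorem: for closed, densely defined $L$ the operator $L^*L$ is self-adjoint and non-negative, so $A:=I+L^*L$ has a bounded inverse defined on all of $H$). The paper then finishes by completing the square at the operator level, writing $\|y-x\|^2+\|Ly\|^2=\|x\|^2+\|A^{1/2}y-A^{-1/2}x\|^2-\|A^{-1/2}x\|^2$ and reading off the minimizer $y=A^{-1}x$ directly; you instead define $\xi:=A^{-1}x$, translate $\xi+L^*L\xi=x$ into the normal equation $(L\xi,Lh)=(x-\xi,h)$ for $h\in\DD(L)$, and expand $G(\xi+h)=G(\xi)+\|h\|^2+\|Lh\|^2$. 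Your version buys two things: it handles the domain question cleanly (the paper's expression $(Ay,y)$ for an arbitrary competitor $y\in\DD(L)$ really needs to be read as $\|A^{1/2}y\|^2$, i.e. as the closed quadratic form, since $y$ need not lie in $\DD(L^*L)$; your expansion only ever evaluates $L^*$ on $L\xi$, where it is legitimately defined), and it ties the result explicitly to the Euler equation of Proposition~\ref{p:euler}. The paper's version is shorter and yields the value of the minimum, $\|x\|^2-\|A^{-1/2}x\|^2$, as a byproduct. Your closing remark that Lax--Milgram on $\DD(L)$ with the graph norm could replace the appeal to von Neumann is also sound and would make the argument self-contained.
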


\begin{prop} \label{p:euler} If $\xi$ is a solution of problem \eqref{A1},
then $\xi$ provides the unique solution of equations
\be \label{euler}
  (\xi-x,h)+(L\xi,L h)=0 \qquad  \textrm{for all } h\in H_0\cap \DD(L).
\ee
\end{prop}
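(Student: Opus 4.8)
The plan is to read \eqref{euler} as the first-order (variational) condition for the minimizer of the quadratic functional $G$ restricted to the linear subspace $H_0\cap\DD(L)$, and then to verify separately that this linear system admits at most one solution.

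First I would use that $H_0\cap\DD(L)$ is a linear subspace: being the intersection of the closed subspace $H_0$ with the linear domain $\DD(L)$, it is closed under addition and scalar multiplication, so for every feasible $h\in H_0\cap\DD(L)$ and every scalar $t\in\C$ the perturbation $\xi+th$ is again feasible (this is precisely where we use that $\DD(L)$ is a subspace, not just an arbitrary set). Exploiting linearity of $L$ and expanding both squared norms with the sesquilinear inner product, I would obtain
\[
  G(\xi+th) = G(\xi) + 2\Re\big[\bar t\big((\xi-x,h)+(L\xi,Lh)\big)\big] + |t|^2\big(\|h\|^2+\|Lh\|^2\big).
\]
Writing $c:=(\xi-x,h)+(L\xi,Lh)$, minimality gives $G(\xi+th)\ge G(\xi)$ for all $t$. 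Taking $t=-\eps c$ with $\eps>0$ small turns the linear term into $-2\eps|c|^2$, which dominates the $O(\eps^2)$ quadratic term; hence $G(\xi+th)<G(\xi)$ unless $c=0$. This forces $c=0$ for every $h$, which is exactly \eqref{euler}.

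For the uniqueness part I would introduce the sesquilinear form $a(y,h):=(y,h)+(Ly,Lh)$ on $H_0\cap\DD(L)$, so that \eqref{euler} reads $a(\xi,h)=(x,h)$ for all feasible $h$. If $\xi_1,\xi_2$ both solved this system, then $a(\xi_1-\xi_2,h)=0$ for every feasible $h$; choosing $h=\xi_1-\xi_2$, which is feasible, yields $a(\xi_1-\xi_2,\xi_1-\xi_2)=\|\xi_1-\xi_2\|^2+\|L(\xi_1-\xi_2)\|^2=0$, so $\xi_1=\xi_2$. (Alternatively, this uniqueness could be deduced from Proposition \ref{p:uniq}, but the direct positivity argument is self-contained.)

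There is no real obstacle here; the computation is routine. The only points demanding care are that the test scalars $t$ must range over all of $\C$, so that an appropriate phase drives the real part of the linear term negative — testing only real $t$ would not rule out a nonzero imaginary part of $c$ in the complex setting — and the already-noted fact that $\xi+th$ stays in $\DD(L)$, which relies on $\DD(L)$ being a linear subspace.
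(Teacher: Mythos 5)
Your proof is correct and follows essentially the same variational argument as the paper: perturb the minimizer by $\xi+th$, expand the quadratic, and kill the linear term, then prove uniqueness by testing the difference of two solutions against itself. The only cosmetic difference is that you choose $t=-\eps c$ to handle the complex phase in one stroke, whereas the paper tests with $\eps$ and $i\eps$ separately to annihilate the real and imaginary parts of the same quantity.
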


\begin{rem} {\rm
If the operator $L$ is bounded, one may rewrite equations \eqref{euler}
as
\[
  ((I+L^*L)\xi -x, h)=0 \qquad \textit{for all } h\in H_0,
\]
where $I$ is the identity operator in $H$.
}
\end{rem}

\section{Solution of the non-adaptive problem}

\begin{thm} \label{t:nonad}  Let $\B$ be a centered wide sense
stationary process with discrete or continuous time. Let $L$ be a
linear filter \eqref{L} with arbitrary measurable frequency characteristic
$\ell(\cdot)$. Then the unique solution of Problem I exists and is given by the
formula
\be \label{sol_na}
  \xi=\int \frac{1}{1+|\ell(u)|^2} \ \W(du).
\ee
The error of optimal approximation, i.e. the minimum in Problem I,
and in its equivalent form \eqref{P1}, is given by
\begin{eqnarray} \label{errna}
    \sigma^2 &:=& \E |\xi-\B(0)|^2 + \E|L\xi|^2
    = \int \frac{|\ell(u)|^2}{1+ |\ell(u)|^2} \, \mu(du)
\\ \nonumber
    &=& \E |\B(0)|^2 - \int \frac{\mu(du)}{1+ |\ell(u)|^2} \ .
\end{eqnarray}
\end{thm}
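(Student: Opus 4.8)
The plan is to pass from Problem I to its analytic form \eqref{P1} via the spectral isometry $H\cong\LL$, and then to minimize the resulting functional \emph{pointwise} in the frequency variable $u$. Under the correspondence $\B(t)\leftrightarrow e^{itu}$ the target $\B(0)$ becomes the constant function $1$, a generic $Y\in H$ becomes some $\psi\in\LL$, and $LY$ becomes $\ell\psi$; thus the quantity to be minimized is
\[
  J(\psi) := \int |\psi(u)-1|^2\, \mu(du) + \int |\ell(u)|^2 |\psi(u)|^2\, \mu(du)
           = \int g_u(\psi(u))\, \mu(du),
\]
where $g_u(z) := |z-1|^2 + |\ell(u)|^2 |z|^2$ is nonnegative. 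The whole point of this rewriting is that $J$ is an integral of a nonnegative integrand depending on $\psi$ only through its value at $u$, so it can be optimized separately at each frequency.

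Next I would carry out the pointwise minimization. For fixed $u$, writing $a:=|\ell(u)|^2\ge 0$ and $z=p+iq$, one finds $g_u(z) = (1+a)(p^2+q^2) - 2p + 1$, a strictly convex quadratic whose unique minimizer is $z^*=1/(1+a)$, with minimal value $a/(1+a)$. This identifies the candidate $\psi^*(u) = 1/(1+|\ell(u)|^2)$, which under the isometry is precisely the element $\xi$ of \eqref{sol_na}, and yields the pointwise minimal value $|\ell(u)|^2/(1+|\ell(u)|^2)$.

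Then I would check admissibility and conclude optimality. Since $\mu$ is finite, the bound $|\psi^*|\le 1$ gives $\psi^*\in\LL$, and $|\ell\psi^*| = |\ell|/(1+|\ell|^2)\le 1/2$ gives $\psi^*\in\DD(L)$, so $\psi^*$ is admissible. For any admissible $\psi$ we have $g_u(\psi(u))\ge g_u(\psi^*(u))$ for every $u$, hence $J(\psi)\ge J(\psi^*)$, with equality iff $\psi=\psi^*$ $\mu$-almost everywhere (strict convexity of each $g_u$); this delivers both existence and uniqueness in one stroke. Integrating the pointwise minimal value gives the first expression for $\sigma^2$ in \eqref{errna}, and the second follows from $|\ell|^2/(1+|\ell|^2) = 1 - 1/(1+|\ell|^2)$ together with $\int \mu(du) = \K(0) = \E|\B(0)|^2$.

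The one step to watch is the legitimacy of minimizing under the integral sign: this rests exactly on the integrand being nonnegative and on the pointwise minimizer $\psi^*$ being a measurable, admissible function, which is why verifying $\psi^*\in\LL\cap\DD(L)$ — using finiteness of $\mu$ — is the crucial (if easy) point, and it is what lets the argument handle an arbitrary measurable, possibly unbounded, characteristic $\ell$. Alternatively, one could bypass the pointwise argument by invoking Proposition \ref{p:LL}: here $H_0=H=\LL$, the filter $L$ is multiplication by $\ell$ (a closed, densely defined operator on $\LL$), $L^*L$ is multiplication by $|\ell|^2$, and $(I+L^*L)^{-1}$ is multiplication by $1/(1+|\ell|^2)$; applied to $x\leftrightarrow 1$ this reproduces \eqref{sol_na}, after which the error is computed by the same integration.
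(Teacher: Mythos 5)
Your argument is correct, but it is not the route the paper takes in its official proof. The paper proves Theorem \ref{t:nonad} by invoking the abstract Hilbert-space machinery: existence from Proposition \ref{p:exist} (the filter $L$ is closed with dense domain), uniqueness from Proposition \ref{p:uniq}, and the explicit formula from Proposition \ref{p:LL} via $\xi=(I+L^*L)^{-1}\B(0)$, after which the error is computed by the isometry. Your primary argument --- passing to Problem I$'$ and minimizing the nonnegative integrand pointwise in $u$ --- is exactly the content of the ``full square identity''
\[
  |\psi(u)-1|^2+|\ell(u)\psi(u)|^2=\bigl(1+|\ell(u)|^2\bigr)\Bigl|\psi(u)-\tfrac{1}{1+|\ell(u)|^2}\Bigr|^2+\tfrac{|\ell(u)|^2}{1+|\ell(u)|^2},
\]
which the paper records only as a remark immediately after the theorem, calling it the ``fairly elementary way.'' What your route buys is self-containedness: existence, uniqueness and the value of the minimum all drop out of strict convexity of each $g_u$ in one stroke, with no appeal to closed operators or to $(I+L^*L)^{-1}$; the admissibility check $\psi^*\in\LL\cap\DD(L)$ that you flag is indeed the only thing to verify, and your bounds $|\psi^*|\le 1$, $|\ell\psi^*|\le 1/2$ settle it for any measurable $\ell$. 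What the paper's route buys is reusability --- the same abstract propositions are cited again for the adaptive and interpolation problems, where no pointwise minimization is available because the admissible class $\span{e^{isu},s\le t}$ is a proper subspace. One small point you lean on implicitly: identifying ``minimize over $Y\in H$'' with ``minimize over all $\psi\in\LL$'' requires that the exponentials span all of $\LL=L_2(\mu)$, a fact the paper establishes in its preliminaries (citing Grenander--Szeg\H{o}); you should cite it rather than treat the surjectivity of the isometry as automatic. Your closing alternative via Proposition \ref{p:LL} is precisely the paper's actual proof.
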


\begin{proof}
The operators of type \eqref{L} are clearly closed and have a dense domain.
Therefore, Proposition \ref{p:exist} provides the existence of
solution. Furthermore, Proposition \ref{p:uniq} confirms that the
solution is unique.  Proposition \ref{p:LL} states the form of the
solution
\[
   \xi=(I+L^*L)^{-1} \B(0).
\]
By using the definition of $L$, for any $Y\in H$ we easily obtain
\[
 (I+L^*L)^{-1} \, Y
 =  \int  \frac{1}{1+|\ell(u)|^2}\ \phi_Y(u)\, \W(du).
\]
For $Y=\B(0)$ we have $\phi_Y(u)\equiv 1$, thus \eqref{sol_na}
follows. Finally, by isometric property,
\begin{eqnarray*}
\sigma^2 &=& \E |\xi-\B(0)|^2 + \E|L\xi|^2
\\
    &=& \int \left[ \left| \frac{1}{1+|\ell(u)|^2}-1\right|^2
    + \left|\frac{\ell(u)}{1+|\ell(u)|^2}\right|^2 \right] \, \mu(du)
\\
    &=&  \int \frac{|\ell(u)|^4+|\ell(u)|^2}{(1+|\ell(u)|^2)^2}  \, \mu(du)
     =  \int \frac{|\ell(u)|^2}{1+|\ell(u)|^2}  \, \mu(du),
\end{eqnarray*}
as claimed in \eqref{errna}.
\end{proof}

\begin{rem} {\rm
For equivalent Problem I$'$ one can arrive to the same conclusion
as in Theorem \ref{t:nonad} in a fairly elementary way. Using the
full square identity
\[
  |\psi(u)-1|^2 +  |\ell(u)\psi(u) |^2 =
  \left(1+ |\ell(u)|^2 \right)
  \left| \psi(u)-\frac{1}{ 1+ |\ell(u)|^2}\right|^2
    +\frac{|\ell(u)|^2}{1+ |\ell(u)|^2}\ ,
\]
one immediately observes that
\be \label{sol_na_anal}
  \psi_\xi(u):=\frac{1}{1+ |\ell(u)|^2}
\ee
solves Problem I$'$, while the error is given by \eqref{errna}.
}
\end{rem}

\begin{rem} {\rm
For processes with  wide sense stationary increments the extremal problem
is the same, hence the solution \eqref{sol_na} is the same.
It should be noticed however that the solution is correct only
if the quantity $\sigma^2$ above is finite (for finite measure
$\mu$ it is always finite but for infinite measure and some
choices of $\ell$ it may be infinite). For example if $\ell$
is a polynomial without free term, $\ell(u)=\sum_{k=1}^m c_ku^k$,
then $\sigma^2$ above is finite. This includes kinetic energy
case $\ell(u)=i \alpha u$. Otherwise, if
\[
   \int \frac{|\ell(u)|^2}{1+|\ell(u)|^2}  \, \mu(du) =\infty,
\]
the quantity in Problem I$'$ is infinite for all admissible $\psi$.
}
\end{rem}

\section{Some examples of non-adaptive approximation}
\label{s:ex}

In this section we illustrate general results by some typical
examples. In all examples we consider kinetic energy, i.e. we let
$\ell(u)=\al(e^{iu}-1)$ in the discrete time case and
$\ell(u)=\al i u$ in the continuous time.

For discrete time we get
\begin{eqnarray*}
   |\ell(u)|^2+1 &=& \al^2(e^{iu}-1)(e^{-iu}-1) +1
   \\
   &=& \frac{\al^2}{\b} (e^{iu}-\b)(e^{-iu}-\b)
\end{eqnarray*}
where
\be \label{beta}
  \b=\frac{2\al^2+1+\sqrt{1+4\al^2}}{2\al^2}>1
\ee
is the larger root of the equation
\[
    \b^2-\frac{2\al^2+1}{\al^2}\, \b +1=0.
\]
For the integrand in the solution \eqref{sol_na} of the non-adaptive problem, we easily derive
an expansion
\begin{eqnarray} \nonumber
    \frac{1}{|\ell(u)|^2+1}
    %%\frac{1}{ \al^2 |e^{i u}-1|^2 +1}
    &=& \frac{\b}{\al^2} \, \frac{1}{(e^{i u}-\b)(e^{-i u}-\b)}
   %%\\  \nonumber
   %% &=& \frac{\b e^{i u}}{\al^2} \, \frac{1}{(e^{i u}-\b)(1-\b e^{i u})}
   %% =   \frac{- e^{i u}}{\al^2} \, \frac{1}{(e^{i u}-\b)(e^{i u}-\b^{-1})}
   %%\\  \nonumber
   %%&=& \frac{- e^{i u}}{\al^2} \, (\b-\b^{-1})^{-1} \left( \frac{1}{e^{i u}-\b}  -  \frac{1}{e^{i u}-\b^{-1}} \right)
   %%\\   \nonumber
    %%&=& \frac{- e^{i u}}{\al^2} \, \frac{\sqrt{1+4\al^2}}{\al^2}\ \left( \frac{1}{e^{i u}-\b}  -  \frac{1}{e^{i u}-\b^{-1}} \right)
   %%\\   \nonumber
    %%&=& \frac{- e^{i u}}{\sqrt{1+4\al^2}}\ \left( \frac{1}{e^{i u}-\b}  -  \frac{1}{e^{i u}-\b^{-1}} \right)
   %%\\   \nonumber
    %%&=& \frac{e^{i u}}{\sqrt{1+4\al^2}}\ \left( \sum_{k=0}^\infty \b^{-k-1}e^{i k u}
    %% +  \sum_{k=0}^\infty \b^{-k}e^{-i(k+1)u} \right)
   \\ \label{series1}
   &=& \frac{1}{\sqrt{1+4\al^2}} \left( 1+ \sum_{k=1}^\infty \b^{-k}
   \left( e^{i k u}+ e^{-i k u}\right)\right).
\end{eqnarray}

By plugging this expression into \eqref{sol_na}, it follows that the solution
of discrete non-adaptive problem involving kinetic energy
is given by the moving average with bilateral geometric progression weight:
\be \label{XgB_star_discr}
  \xi =  \frac{1}{\sqrt{1+4\al^2}} \left( \B(0) + \sum_{k=1}^\infty \b^{-k}
   \left( \B(k)+ \B(-k) \right)\right).
\ee
By \eqref{errna}, the error of optimal non-adaptive approximation
in the discrete time case is given by
\be \label{errna_kin_discr}
   \sigma^2  = \int_{-\pi}^{\pi}
   \frac{\al^2 |e^{iu}-1|^2}{ \al^2 |e^{iu}-1|^2 +1}\ \mu(du).
\ee

For continuous time we get similar results.
By using inverse Fourier transform, we have
\[
  \frac{1}{ |\ell(u)|^2 +1} = \frac{1}{ \al^2 u^2 +1}
  = \frac 1{2\al}\ \int_\R  \exp\{i\tau u - |\tau|/\al \} \, d\tau.
\]
By plugging this expression into \eqref{sol_na}, it follows that
the solution of continuous non-adaptive problem involving kinetic
energy is given by the moving average
%%\be \label{XgB_star}
%%   \X(t) = \frac 1{2\al} \int_\R \exp\{-|\tau|/\al\} \B(t+\tau) d\tau.
%%\ee
\be \label{XgB_star}
   \xi = \frac 1{2\al} \int_\R \exp\{-|\tau|/\al\} \B(\tau) d\tau.
\ee
By \eqref{errna}, the error of optimal non-adaptive approximation
in the continuous time case is given by
\be \label{errna_kin}
   \sigma^2 = \int_\R  \frac{\al^2 u^2}{ \al^2 u^2 +1}\ \mu(du).
\ee

Notice that both solutions \eqref{XgB_star_discr} and
\eqref{XgB_star} are indeed non-adaptive at all because they involve
future values of $\B$. Let us also stress  that these solutions formulae
are the same for any spectral (covariance) structure of $\B$.
The formulae \eqref{XgB_star} and \eqref{errna_kin} were obtained
earlier in \cite{KabLi}.

We start with discrete time examples.

%%%%%%%%%%%%%%%%%%%%% i.i.d. %%%%%%%%%%%%%

\begin{example} \label{ex:iid} {\rm
A sequence $(B(t))_{t\in \Z}$ of centered non-correlated random variables
with constant variance $V\ge 0$ has the spectral measure
\[
   \mu(du):= \frac{\V du}{2\pi}.
\]
Surprisingly, the answer to the non-adaptive problem taking kinetic
energy into account even for this sequence is already non-trivial, since
the best non-adaptive approximation is given by the series
\eqref{XgB_star_discr}. The error formula \eqref{errna_kin_discr} yields
\[
   \sigma^2  = \V \int_{-\pi}^{\pi}  \frac{\al^2 |e^{iu}-1|^2 du}{2\pi
   \al^2 |e^{iu}-1|^2 +1}
   =  \V  \left (1 - \frac{1}{\sqrt{1+4\al^2}} \right),
\]
cf. an extension below in \eqref{errna_ar}.
}
\end{example}
\smallskip

%%%%%%%%% auto regressive %%%%%%%%%%%%%%%%%%%%%%%%%%%%%%%%%%%%%%

\begin{example} \label{ex:ar}
{\rm
A sequence of random variables $(\B(t))_{t\in\Z}$ is called
autoregressive, if it satisfies the equation
$\B(t)=\rho \B(t-1) + \xi(t)$, where $|\rho|<1$ and
$(\xi(t))_{t\in\Z}$ is a sequence of centered non-correlated
random variables with some variance $\V$. In this case we have
a representation
\[
   \B(t)= \sum_{j=0}^\infty \rho^j \xi(t-j), \qquad t\in \Z.
\]
Given the spectral representation
$\xi(t) = \int_{-\pi}^\pi e^{i t u} \W(du)$ from the previous
example, we obtain
\[
   \B(t)= \int_{-\pi}^\pi \sum_{j=0}^\infty \rho^j e^{i(t-j)u} \W(du)
   = \int_{-\pi}^\pi  \frac{1}{ 1- \rho\, e^{-i u}} \ e^{i t u} \W(du).
\]
We see that the spectral measure  for $\B$ is
\be \label{mu_ar}
    \mu(du):= \frac {\V du}{2\pi|1- \rho\, e^{-i u}|^2}\, .
\ee

The best non-adaptive approximation is given by the series
\eqref{XgB_star_discr}. By \eqref{errna_kin_discr} and \eqref{mu_ar},
the error of non-adaptive approximation is
\begin{eqnarray*}
  \sigma^2 &=&   \frac{\V}{2\pi}  \int_{-\pi}^\pi
  \frac{\al^2 |e^{iu}-1|^2}{ \al^2 |e^{iu}-1|^2 +1}\
  \frac {du}{|1- \rho\, e^{-iu}|^2}
  \\
  &=&   \frac{\V}{2\pi} \left[ \int_{-\pi}^\pi
        \frac {du}{|1- \rho\, e^{-iu}|^2}
        - \int_{-\pi}^\pi  \frac{1}{ \al^2 |e^{iu}-1|^2 +1} \
        \frac {du}{|1-\rho\, e^{-iu}|^2} \right].
\end{eqnarray*}
By using the expansion
\be\label{series2}
   \frac {1}{|1-\rho\, e^{-iu}|^2}
   = \frac{1}{1-\rho^2} \left( 1+ \sum_{k=1}^\infty \rho^{k}
   \left( e^{iku}+ e^{-iku}\right)\right)
\ee
we obtain immediately that
\[
    \int_{-\pi}^\pi  \frac {du}{|1- \rho\, e^{-iu}|^2}
    =  \frac{2\pi}{1-\rho^2} \, .
\]
Moreover, it follows from \eqref{series1}  and \eqref{series2} that
\[
     \int_{-\pi}^\pi  \frac{1}{ \al^2 |e^{iu}-1|^2 +1}
     \  \frac {du}{|1-\rho\, e^{-iu}|^2}
     = \frac{2\pi}{\sqrt{1+4\al^2}(1-\rho^2)}\
     \frac{\b+\rho}{\b-\rho}
\]
with $\b=\b(\al)$ defined in \eqref{beta}. Finally,
\be \label{errna_ar}
  \sigma^2= \frac{\V}{1-\rho^2} \left( 1- \frac{1}{\sqrt{1+4\al^2}}\
  \frac{\b+\rho}{\b-\rho}  \right).
\ee

Notice that Example \ref{ex:iid} is a special case of this one
with $\rho=0$.
}
\end{example}
\smallskip

%%%%  Moving average %%%%%%%%%%%%%%%%%%%%%%%%%%%%%%%%%%%%%%%%%%%%%%%%%%

\begin{example}
{\rm
We call a sequence of random variables  $(\B(t))_{t\in\Z}$
a simplest moving average sequence if it admits a representation
$\B(t)= \xi(t)+\rho\, \xi(t-1)$ where $\xi(t)$ is the same as
in Example $\ref{ex:ar}$. Proceeding as above, we obtain
\[
   \B(t)=  \int_{-\pi}^\pi  \left(1+\rho e^{-iu} \right) e^{itu} \W(du),
   \qquad t\in \Z.
\]
We see that the spectral measure for $\B$ is
\be\label{mu_ma1}
   \mu(du):= \frac {\V |1+ \rho\, e^{-iu}|^2 du}{2\pi}\,.
\ee

The best non-adaptive approximation is given by
\eqref{XgB_star_discr}. By  \eqref{mu_ma1} and \eqref{errna_kin_discr},
the error of non-adaptive approximation is
\begin{eqnarray*}
  \sigma^2 &=&   \frac {\V}{2\pi}
  \int_{-\pi}^\pi  \frac{\al^2 |e^{iu}-1|^2}{ \al^2 |e^{iu}-1|^2 +1}\
  |1+ \rho\, e^{-iu}|^2 du
  \\
    &=&   \frac {\V}{2\pi}
     \int_{-\pi}^\pi  \left[  1- \frac{1}{ \al^2 |e^{iu}-1|^2 +1}\right]\
     |1+ \rho\, e^{-iu}|^2 du
  \\
    &=& \frac {\V}{2\pi} \left[
     \int_{-\pi}^\pi  |1+ \rho\, e^{-iu}|^2 du
     -
      \int_{-\pi}^\pi  \frac{|1+ \rho\, e^{-iu}|^2}
      {\al^2 |e^{iu}-1|^2 +1}\ du  \right].
\end{eqnarray*}
We easily get
\[
   \int_{-\pi}^\pi  |1+ \rho\, e^{-iu}|^2 du
   =  \int_{-\pi}^\pi (1+\rho^2 + \rho(e^{-iu}+e^{iu}))  du
   = 2\pi (1+\rho^2)
\]
and, by using \eqref{series1},
\begin{eqnarray*}
   && \int_{-\pi}^\pi  \frac{|1+ \rho\, e^{-iu}|^2}
   { \al^2 |e^{iu}-1|^2 +1}\  du =  \frac{1}{\sqrt{1+4\al^2}} \times
   \\
   &\times&  \int_{-\pi}^\pi  (1+\rho^2 + \rho(e^{-iu}+e^{iu}))
    \left( 1+ \sum_{k=1}^\infty \b^{-k} \left( e^{iku}
    + e^{-iku}\right)\right) du
   \\
    &=& \frac{2\pi}{\sqrt{1+4\al^2}}
    \left(  1+\rho^2 + \frac{2\rho}{\b}  \right),
\end{eqnarray*}
whereas
\[
  \sigma^2= \V \left( 1+\rho^2 - \frac{1}{\sqrt{1+4\al^2}}
  \left(  1+\rho^2 + \frac{2\rho}{\b}  \right)\right)
\]
with $\b=\b(\al)$ defined in \eqref{beta}.
}
\end{example}

%%%%%%%%%%%%%%% sums of iid %%%%%%%%%%%%%%%%%%%%%%%%%%%%%%%%%%%%%%%%%

\begin{example} \label{ex:sums}
{\rm Consider the partial sums of a sequence of centered
non-correlated random variables $(\xi(j))_{j\ge 1}$ each having
variance $\V$. Let $\B(0)=0$ and $\B(t):=\sum_{j=1}^t \xi(j)$,
$t\in\N$. Given the spectral representation in the form
$\xi_j = \int_{-\pi}^\pi e^{i t u} \W(du)$, we have
\[
   \B(t)= \int_{-\pi}^\pi \Big( \sum_{j=1}^t  e^{i j u} \Big) \W(du)
   = \int_{-\pi}^\pi \frac{e^{i u}}{e^{i u}-1} \ (e^{i t u}-1) \W(du)
\]
and we obtain the spectral measure
\be \label{mu_sums}
   \mu(du):= \frac {\V du}{2\pi |e^{iu}-1|^2}.
\ee

The best non-adaptive approximation is given by \eqref{XgB_star_discr}.
By \eqref{mu_sums} and \eqref{errna_kin_discr}, the corresponding
approximation error of
\[
  \sigma^2 =  \frac{\V}{2\pi}
  \int_{-\pi}^\pi  \frac{\al^2 \, du}{ \al^2 |e^{iu}-1|^2 +1}\, .
\]
Furthermore, by using expansion \eqref{series1} we have
\be \label{errna_sums}
  \sigma^2 =  \frac{\V \al^2}{2\pi} \cdot \frac{2\pi}{\sqrt{1+4\al^2}}
  =  \frac{\V \al^2}{\sqrt{1+4\al^2}}\, .
\ee
}
\end{example}
\bigskip

We pass now to continuous time examples.

%%%%%%%%%%%%%%% Ornstein - Uhlenbeck %%%%%%%%%%%%%%%%%%%%%%%%%%%%%%%%%

\begin{example}
{\rm
The Ornstein--Uhlenbeck process is a centered Gaussian stationary
process with covariance $\K_\B(t)=e^{-|t|/2}$ and the spectral measure
\be \label{mu_OU}
    \mu(du):= \frac {2 du}{\pi(4u^2+1)}.
\ee

Since we are developing only a linear theory, we do not need
Gaussianity assumption and may call Ornstein--Uhlenbeck any wide
sense stationary process having the mentioned covariance and spectral
measure.

The best non-adaptive approximation is given by \eqref{XgB_star}.
By \eqref{mu_OU} and \eqref{errna_kin}, the error of non-adaptive
approximation is
\[
  \sigma^2 =   \int_\R  \frac{\al^2 u^2}{ \al^2 u^2 +1}\
  \frac{2 du}{\pi(4 u^2+1)} = \frac{\al}{2+\al}\, .
\]
}
\end{example}

%%%%%%%%%%%%%%%%%%% Fractional Brownian motion  %%%%%%%%%%%%%%%%%%%%

\begin{example} \label{ex:fbm}
{\rm
Fractional Brownian motion $(B^H(t))_{t\in\R}$,
$0<H\le 1$,  is a centered Gaussian process with covariance
\[
   \cov (B^H(t_1),B^H(t_2))
   = \frac 12\left( |t_1|^{2H} + |t_2|^{2H} - |t_1-t_2|^{2H}\right).
\]
For any process with this covariance (interesting non-Gaussian
examples of this type are also known, see e.g. Telecom processes
in \cite{KajTaq, RPE}) the spectral measure is
\be \label{mu_fbm}
    \mu(du):= \frac { M_H du}{|u|^{2H+1}}\,
\ee
where $M_H=\tfrac{\Gamma(2H+1) \sin(\pi H)}{2\pi}$\,.

The best non-adaptive approximation is given by \eqref{XgB_star}.
By \eqref{mu_fbm} and  \eqref{errna_kin}, the error of non-adaptive
approximation is
\begin{eqnarray} \nonumber
  \sigma^2 &=&   \int_\R  \frac{\al^2 u^2}{ \al^2 u^2 +1}\  \frac { M_H du}{|u|^{2H+1}}
  =  M_H \al^2 \int_\R \  \frac {|u|^{1-2H} du}{\al^2 u^2 +1 }
  \\ \nonumber
   &=&  2 M_H \al^{2H} \int_0^\infty \  \frac {w^{1-2H} dw}{w^2 +1 }
   = M_H \al^{2H} \int_0^\infty \  \frac {v^{-H} dv}{v +1}
  \\ \label{errna_fbm}
    &=&  M_H \al^{2H} \cdot \frac{\pi}{\sin(\pi H)}
    = \frac{\Gamma(2H+1)\, \al^{2H} }{2}\, .
\end{eqnarray}
This result was obtained in \cite{KabLi}.
}
\end{example}

%%%%%%%%%%%%%%%%%% Levy process %%%%%%%%%%%%%%%%%%%%%%%%%%%%%%%%%%%%%

\begin{example}
{\rm
Consider a centered L\'evy process $(\B(t))_{t\ge 0}$ with finite
variance (this class includes Wiener process and centered Poisson
processes of any constant intensity). Let $\Var \B(1)= \V$. For any
such process the spectral measure is
\be \label{mu_w}
   \mu(du):= \frac {\V du}{2\pi u^2}.
\ee

This is a continuous version of Example $\ref{ex:sums}$, as well
as a special case of  Example \ref{ex:fbm} with $H=\tfrac 12$.
Notice, however, that in the more delicate problems of
{\it adaptive} approximation (that are not studied here) the cases
$H=\tfrac 12$ and $H \not =\tfrac 12$ are totally different.

The best non-adaptive approximation is given by \eqref{XgB_star}.
The calculation of non-adaptive approximation error based
on \eqref{mu_w} and \eqref{errna_kin} is a special case
of \eqref{errna_fbm} with $H=\tfrac 12$, up to a scaling constant
$\V$. We have thus
\be  \label{errna_w}
  \sigma^2= \frac{\al\,\V}{2}\,.
\ee
}
\end{example}
\bigskip

Finally, notice that there is a natural interplay between continuous
time and discrete time approximation. Let $(\B(t))_{t\in\R}$ be a
continuous time wide sense stationary process.
%%or a process with  wide sense stationary increments.
For any small $\delta>0$ consider its discrete time version
$(B_\delta(s))_{s\in\Z}:= (\B(\delta s ))_{s\in\Z}$.
The discrete time counterpart for kinetic energy  $\al^2 \X'(t)^2$
of approximating process is
$\tfrac{\al^2(\X(\delta(s+1))- \X(\delta s))^2}{\delta^2}$,
thus one should consider discrete time approximation with parameter
$\al_\delta:=\tfrac{\al}{\delta}$. Using \eqref{beta}, we see that
\[
  \b_\delta:= \b(\al_\delta) = 1 + \frac{1+o(1)}{\al_\delta}\, ,
  \qquad \textrm{as } \al_\delta\to \infty,
\]
hence
$\b_\delta^{\al_\delta} \to e$ as $\al_\delta\to \infty$.

Therefore, for the best discrete time approximations
\eqref{XgB_star_discr} of $B_\delta$ we have
\begin{eqnarray*}
    \X_\delta(s) &=&
   \frac{1}{\sqrt{1+4\al_\delta^2}}
    \left( \B(s) + \sum_{k=1}^\infty \b_\delta^{-k}
   \left( \B(s+k\delta)+ \B(s-k\delta) \right)\right)
\\
   &=&  \frac{(1+o(1)) \delta}{2\al}
    \left( B_\delta(s) + \sum_{k=1}^\infty [\b_\delta^{\al_\delta}]^{-k\delta/\al}
    \left( \B(s+k\delta)+ \B(s-k\delta) \right)\right)
\\
     &\to& \frac 1{2\al} \int_\R \exp\{-|\tau|/\al\} \B(s+\tau)\, d\tau,
    \qquad   \textrm{as } \delta\to 0,
\end{eqnarray*}
which is the solution of continuous time approximation problem
\eqref{XgB_star}. Similarly, one has the convergence of the optimal
errors of approximation, cf. e.g. \eqref{errna_sums} and
\eqref{errna_w}.

%%\pagebreak
%%\Large{

%%%%%%%%%%%%%%%%%%%%%%%%%%%%%%%%%%%%%%%%%%%%%%%%%%%%%%%%%%%%%%%%%%%%%%

\section{An extension of A.N. Kolmogorov and M.G. Krein theorems
on error-free prediction}

\subsection{Discrete time}
\label{ss:discr}
In this subsection we assume that $(\B(t))_{t\in \Z}$ is a  wide sense
centered stationary sequence and $\mu$ is its spectral measure. Let us represent
$\mu$ as the sum $\mu=\mu_a+\mu_s$ of its absolutely continuous and
singular components. We denote by $f_a$ the density of $\mu_a$ with
respect to the Lebesgue measure.

Consider Problem II and let
\[
  \sigma^2(t):=\inf_{Y\in H_t} \left\{\E|Y-\B(0)|^2+ \E|L Y|^2\right\},
  \qquad t\in \Z,
\]
be the corresponding prediction errors. We also let $\sigma^2(\infty)$
denote the similar quantity with $H_t$ replaced by $H$. It is easy to
see that the sequence  $\sigma^2(t)$ is non-increasing in $t$ and
\[
  \lim_{t\to+\infty}  \sigma^2(t) = \sigma^2(\infty).
\]

For the classical prediction problem, i.e. for $L=0$, by Kolmogorov's
theorem (singularity criterion, see \cite[Chapter II, Theorem 5.4]{Roz})
we have
\[
   \sigma^2(t) = \sigma^2(\infty) = 0 \qquad  \textrm{for all } t\in \Z,
\]
iff
\be \label{logdiv}
   \int_{-\pi}^\pi |\ln f_a(u)| du = \infty.
\ee
In our case, for $L\not=0$, we have $\sigma^2(t)\ge \sigma^2(\infty)>0$
unless $\ell(\cdot)\equiv 0$\ $\mu$-a.s. Therefore, we
state the problem as follows: when $\sigma^2(t)=\sigma^2(\infty)$
holds for a given $t\in\Z$?
In other words: when approximation based on the knowledge of
the process up to time $t$ works as well as the one based on
the knowledge of the whole process?

\begin{thm} \label{t:logdiv}
   If \eqref{logdiv} holds, then we have
   $\sigma^2(t)=\sigma^2(\infty)$ for all $t\in \Z$.
\end{thm}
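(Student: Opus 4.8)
The plan is to reduce the statement to a purely geometric fact about the Hilbert space $H$, namely that condition \eqref{logdiv} forces $H_t=H$ for every $t\in\Z$. The key observation is that Problems~I and II minimize the \emph{same} functional $G(Y)=\E|Y-\B(0)|^2+\E|LY|^2$ and differ only in their feasible sets: Problem~I optimizes over $H\cap\DD(L)$ while Problem~II optimizes over $H_t\cap\DD(L)$. Since $H_t\subset H$ we always have $\sigma^2(t)\ge\sigma^2(\infty)$, and equality for all $t$ will follow at once once the two feasible sets coincide. In particular the operator $L$, which enters only through the objective and not through the constraint, plays no role in the reduction: everything comes down to showing $H_t=H$.

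To obtain $H_t=H$ I would invoke the classical Kolmogorov singularity criterion quoted above, i.e.\ the case $L\equiv 0$ (see \cite[Chapter II, Theorem 5.4]{Roz}). Under \eqref{logdiv} it gives, for the classical problem, $\sigma^2(t)=\sigma^2(\infty)=0$ for all $t$; but with $L\equiv0$ the quantity $\sigma^2(t)$ is exactly the squared distance from $\B(0)$ to $H_t$, so its vanishing means $\B(0)\in H_t$ for every $t$. Equivalently, one recalls that \eqref{logdiv} is precisely the criterion for the sequence to be deterministic.

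From $\B(0)\in H_{-1}$ I would then bootstrap using wide sense stationarity. The shift $\B(s)\mapsto\B(s+1)$ extends to a unitary map of $H$ sending $H_t$ onto $H_{t+1}$, whence $\B(s)\in H_{s-1}$ for every $s\in\Z$. Consequently $H_{t+1}=\span{H_t,\B(t+1)}=H_t$ for all $t$, so all the spaces $H_t$ coincide; taking the closure of their union yields $H_t=H$ for every $t$. At the analytic level this is the assertion that $\span{e^{isu},\,s\le t}=\LL$ for all $t$, which is where the isometry between $H$ and $\LL=L_2(\mu)$ enters.

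With $H_t=H$ in hand the conclusion is immediate: the feasible sets $H_t\cap\DD(L)$ and $H\cap\DD(L)$ are equal, the objective functionals are identical, and therefore $\sigma^2(t)=\sigma^2(\infty)$ for every $t$. There is no genuine computational obstacle; the only point requiring care is the passage from \eqref{logdiv} to determinism, that is, remembering that the singular part $\mu_s$ of the spectral measure does not affect one-step prediction (the Szeg\H{o}--Kolmogorov formula involves only $f_a$), so that log-divergence of $f_a$ alone already forces $\B(0)\in H_t$.
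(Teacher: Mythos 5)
Your proposal is correct and follows essentially the same route as the paper: both reduce the claim to the fact that \eqref{logdiv} forces $H_t=H$ for every $t$ (the classical Kolmogorov singularity criterion), after which the two minimization problems have identical objectives over identical feasible sets. The only difference is that you spell out the bootstrap from $\B(0)\in H_{-1}$ to $H_t=H$ via the shift unitary, whereas the paper simply cites Doob and Rozanov for this fact.
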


\begin{proof}
If \eqref{logdiv} holds, then for all $t$ we have $H_t=H$, see e.g.
\cite[Chapter XII, Section 4]{Doob1} or
\cite[Chapter II, Section 2]{Roz}. Therefore, by Theorem \ref{t:nonad}
\[
  \sigma^2(t)=\sigma^2(\infty)
  =\int_{-\pi}^{\pi} \frac{|\ell(u)|^2}{1+|\ell(u)|^2}\, \mu(du).
\]
\end{proof}

\begin{thm} \label{t:logconv}
   If the process $\B$  is such that the density $f_a$ satisfies
\be \label{logconv}
   \int_{-\pi}^\pi |\ln f_a(u)| du < \infty,
\ee
then for every fixed $t\in \Z$ we have $\sigma^2(t)=\sigma^2(\infty)$ iff
the function %%$(1+|\ell(u)|^2)^{-1}$
$\frac{1}{1+|\ell(u)|^2}$
is a trigonometric polynomial of degree not exceeding $t$, i.e.
\[
  \frac{1}{1+|\ell(u)|^2}= \sum_{|j|\le t} b_{j}\, e^{iju}
\]
Lebesgue-a.e. with some coefficients $b_j\in \C$.

In particular, if $t<0$, then $\sigma^2(t)<\sigma^2(\infty)$;
equality $\sigma^2(0)=\sigma^2(\infty)$ holds iff $|\ell(\cdot)|$ is
a constant Lebesgue-a.e.
\end{thm}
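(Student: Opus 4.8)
The plan is to reduce the equality $\sigma^2(t)=\sigma^2(\infty)$ to a membership question and then resolve it by Szeg\H{o} factorisation. First I would note that $H_t\subseteq H$ forces $\sigma^2(t)\ge\sigma^2(\infty)$, while Propositions \ref{p:exist}--\ref{p:uniq} give unique minimisers for Problems I and II (the filter \eqref{L} being closed with dense domain). By Theorem \ref{t:nonad} the solution of Problem I is $\xi$ with spectral function $\psi_\xi=1/(1+|\ell|^2)$. If $\psi_\xi$ lies in $\LL_t:=\span{e^{isu},s\le t}$ (the spectral image of $H_t$), then $\xi$ is admissible for Problem II and equality holds; conversely, if $\sigma^2(t)=\sigma^2(\infty)$ the minimiser over $H_t$ realises the global minimum and hence, by uniqueness, equals $\xi$, so $\psi_\xi\in\LL_t$. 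Everything therefore comes down to deciding when the bounded positive function $\psi_\xi$ belongs to the past subspace $\LL_t$.

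Second, I would remove the singular spectrum from the picture. Taking $g\perp\LL_0$, the finite complex measure $\bar g\,\mu$ has vanishing Fourier--Stieltjes coefficients on a half-line, so the F. and M. Riesz theorem makes it absolutely continuous; as $\mu_s$ is singular this forces $g=0$ $\mu_s$-a.e. Hence $L_2(\mu_s)\subseteq\LL_0$, and multiplying by the unimodular $e^{itu}$ yields $L_2(\mu_s)\subseteq\LL_t$ for every $t$. Consequently $\psi_\xi\in\LL_t$ iff its absolutely continuous part does, and since any element of $\LL_t$ is an $L_2(\mu)$-limit of polynomials in $e^{isu}$ with $s\le t$ --- a fortiori an $L_2(\mu_a)$-limit --- this a.c.\ part lies in $\LL_t^{(a)}:=\span{e^{isu},s\le t}$ computed inside $L_2(\mu_a)=L_2(f_a\,du)$. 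Because \eqref{logconv} forces $f_a>0$ Lebesgue-a.e., the phrases ``$\mu_a$-a.e.'' and ``Lebesgue-a.e.'' agree, so it suffices to work in $L_2(f_a\,du)$.

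The analytic core, which I expect to be the main obstacle, uses the Szeg\H{o} factorisation: \eqref{logconv} provides an outer function $h\in H^2$ with $|h|^2=f_a$, and $\phi\mapsto\phi\bar h$ is a unitary map of $L_2(f_a\,du)$ onto $L_2(du)$ carrying each $e^{isu}$ with $s\le t$ to a function with Fourier spectrum in $(-\infty,t]$. Thus $\psi_\xi\in\LL_t^{(a)}$ gives that $\psi_\xi\bar h$ has spectrum in $(-\infty,t]$; conjugating and using that $\psi_\xi$ is real, $\psi_\xi h$ has spectrum in $[-t,\infty)$, i.e.\ $z^t\psi_\xi h\in H^2$ with $z=e^{iu}$. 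Now comes the delicate point: since $h$ is outer, the quotient $z^t\psi_\xi=(z^t\psi_\xi h)/h$ belongs to the Smirnov class $N^+$, and because $\psi_\xi=1/(1+|\ell|^2)$ is bounded its boundary values are square-integrable, so Smirnov's maximum principle upgrades it to $z^t\psi_\xi\in H^2$. Hence $\psi_\xi$ has spectrum in $[-t,\infty)$, and reality (conjugation reversing the spectrum) gives spectrum in $(-\infty,t]$ as well; altogether $\psi_\xi=\sum_{|j|\le t}b_je^{iju}$ Lebesgue-a.e. The opposite implication is elementary: such a polynomial lies in $\LL_t$ since all its exponents are $\le t$, and the difference $\psi_\xi-\sum_{|j|\le t}b_je^{iju}$ vanishes $\mu_a$-a.e.\ and so sits in $L_2(\mu_s)\subseteq\LL_t$.

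Finally I would read off the special cases. For $t<0$ the only trigonometric polynomial of degree $\le t$ is $0$, which the strictly positive $\psi_\xi$ cannot equal, so $\sigma^2(t)<\sigma^2(\infty)$. For $t=0$ the criterion becomes $1/(1+|\ell|^2)\equiv b_0$, that is, $|\ell(\cdot)|$ constant Lebesgue-a.e.
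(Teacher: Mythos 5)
Your proposal is correct and follows essentially the same route as the paper: reduce $\sigma^2(t)=\sigma^2(\infty)$ to the membership question $\psi_\infty=\tfrac{1}{1+|\ell|^2}\in\span{e^{isu},s\le t}$ via the uniqueness of the minimizer, pass to $L_2(f_a\,du)$, apply the Szeg\H{o} outer factorization $f_a=|g_*|^2$, divide by the outer function to get one-sided Fourier spectrum, and use the reality of $\psi_\infty$ to force the spectrum into $[-t,t]$. The differences are in packaging rather than substance: where the paper proves boundedness of $A=h/g$ by hand through the inner--outer factorization of $h$ and the Poisson representation, you invoke Smirnov's maximum principle for the Smirnov class $N^+$ (an equivalent, standard shortcut); you treat all $t$ uniformly by multiplying by $z^t$ instead of first handling $t\le 0$ and then reducing $t>0$ to that case; and you make explicit, via the F.\ and M.\ Riesz theorem, that $L_2(\mu_s)\subseteq\span{e^{isu},s\le t}$, which cleanly justifies both discarding the singular component in the necessity direction and the converse implication (that Lebesgue-a.e.\ equality to a trigonometric polynomial suffices even when $\mu_s\neq 0$) --- a point the paper's proof passes over as ``obvious.''
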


\begin{proof}
The analytic form for prediction error is
\[
  \sigma^2(t) =
  \inf_{\psi \in \LL(t)} \int_{-\pi}^\pi
  \left\{   |\psi(u)-1|^2 + |\ell(u)\psi(u)|^2 \right\} \mu(du),
\]
where $\LL(t)= \span{e^{isu}, s\le t}$ in $\LL$.

The solution $\psi_t(u)$ of our problem is unique by Proposition \ref{p:uniq};
we know from \eqref{sol_na_anal}  that for $t=\infty$ the solution is
$\psi_\infty(u)=\tfrac{1}{1+|\ell(u)|^2}$. It follows that
$\sigma^2(t)=\sigma^2(\infty)$ iff  $\psi_t=\psi_\infty$,
i.e. iff $\psi_\infty\in\LL(t)$. The latter is equivalent to
the existence of the trigonometric polynomials
\[
   \vartheta_k(u):=\sum_{j\le t} a_{j,k}\, e^{iju}
\]
such that
\[
  \lim_{k\to\infty} \int_{-\pi}^\pi
     \left| \vartheta_k(u)- \frac{1}{1+|\ell(u)|^2}\right|^2 \mu(du) =0.
\]
It follows that
\begin{eqnarray} \nonumber
  && \lim_{k\to\infty} \int_{-\pi}^\pi
     \left| \overline{\vartheta_k(u)}- \frac{1}{1+|\ell(u)|^2}\right|^2 f_a(u)\, du
  \\ \label{psikconv}
  &=& \lim_{k\to\infty} \int_{-\pi}^\pi
   \left|\vartheta_k(u)-\frac{1}{1+|\ell(u)|^2}\right|^2 f_a(u)\,du
   =0.
\end{eqnarray}

Due to assumption \eqref{logconv}
the density $f_a$ admits a representation
\[
   f_a(u)=|g_*(e^{iu})|^2
\]
where $g_*(e^{iu}), u\in[-\pi,\pi),$ is the boundary value of the function
\[
  g(z) := \exp\left\{ \frac{1}{4\pi} \int_{-\pi}^\pi \ln f_a(u)\
          \frac{e^{iu}+z}{e^{iu}-z}\ du \right\}, \qquad
          |z|<1,
\]
which is an analytic function in the unit disc $\D:=\{z\in\C, |z|<1\}$.
In other words, $g$ is an outer function from the Hardy class $\HH_2(\D)$;
we refer to \cite[Chapter 17]{Ru} for the facts and definitions mentioned in this subsection
concerning $\HH_2(\D)$, outer and inner functions.
Notice that $\tfrac{1}{g}$ also is an outer analytic function.

Rewrite \eqref{psikconv} as
\[
   \lim_{k\to\infty} \int_{-\pi}^\pi \left| \overline{\vartheta_k(u)} g_*(e^{iu})
   - \frac{g_*(e^{iu})}{1+|\ell(u)|^2} \right|^2 du = 0.
\]
Assume for a while that $t\le 0$. Then
$\overline{\vartheta_k} g_*$ also is the boundary value of a function from $\HH_2(\D)$.
Since the class of such boundary functions is closed in $L_2$ (with respect to Lebesgue
measure), this implies that
\[
    h_*(e^{iu}):= \frac{g_*(e^{iu})}{1+|\ell(u)|^2}
 \]
is the boundary value of a function $h\in \HH_2(\D)$.
Moreover, we have a power series representation
\be \label{hpower}
    h(z)=\sum_{j\ge - t} h_j \, z^j, \qquad z\in \D.
\ee
Let us denote
\[
   A_1(u):=\frac{1}{1+|\ell(u)|^2} = h_*(e^{iu}) \cdot \frac{1}{g_*(e^{iu})}\, ,
   \qquad u\in [-\pi,\pi).
\]
The function $e^{iu}\mapsto A_1(u)$ admits an analytic continuation
from the unit circle to $\D$ given by
$A(z)=h(z)\cdot \tfrac{1}{g(z)}$.
Notice that in the power series representation
\[
   A(z) = \sum_{j=0}^\infty a_j\, z^j, \qquad z\in\D,
\]
the terms with $j<-t$ vanish due to \eqref{hpower}.
We have therefore
\[
   A(z) = \sum_{j\ge -t}^\infty a_j \,z^j, \qquad z\in\D.
\]

Let us prove that $A(\cdot)$ is bounded on $\D$. Write the
factorization $h=M_h\cdot Q_h$ where $M_h$ is an inner function and $Q_h$ is an outer function.
Then $A= M_h\cdot \frac{Q_h}{g}$. The function $M_h$ is bounded  on $\D$ by the
definition of an inner function while $\frac{Q_h}{g}$ is an outer function
with bounded boundary values, because Lebesgue-a.e.
\[
   \left|\left(\tfrac{Q_h}{g}\right)_*(e^{iu})\right|
   =\left|\frac{A_1(u)}{(M_h)_*(e^{iu})}\right|
   = \left| A_1(u)\right|\le 1.
\]
Since for outer functions the boundedness on the boundary implies,
via Poisson kernel representation, the boundedness on $\D$, we see that
the factor $\frac{Q_h}{g}$ is also bounded on $\D$. We conclude that
$A(\cdot)$ is bounded on $\D$.

For each $r\in (0,1)$ consider the function
\[
   A_r(u):=A(re^{iu})= \sum_{j\ge -t}^\infty a_j\, r^j\, e^{iju}\, ,
   \qquad u\in [-\pi,\pi).
\]

Since $A$ is bounded, the family $\{A_r\}_{0<r<1}$ is uniformly bounded.
Since $A_r\to A_1$ Lebesgue-a.e., as $r\nearrow 1$, the convergence also holds
in $L_2$. In particular, all Fourier coefficients converge and we have
\[
   A_1(u) = \frac{1}{1+|\ell(u)|^2} = \sum_{j\ge -t} a_j \,e^{iju}.
\]
Since the left hand side is real, for $t<0$ the latter representation
is impossible. For $t=0$ it is only possible when both sides are equal
(Lebesgue-a.e.) to the constant $a_0$.

For $t>0$ the same reasonings give a representation
\[
   \frac{e^{itu}}{1+|\ell(u)|^2} = \sum_{j\ge 0} a_j e^{iju}
\]
which implies that
\[
   \frac{1}{1+|\ell(u)|^2} = \sum_{j=-t}^{t} a_{j+t} e^{iju}
\]
is a trigonometric polynomial of degree not exceeding $t$.
\medskip

The converse assertion is obvious: if for $t\ge 0$
we have a representation
\[
   \psi_\infty(u) = \frac{1}{1+|\ell(u)|^2} = \sum_{j=-t}^{t} b_j e^{iju},
\]
then $\psi_\infty \in \LL(t)$ by the definition of $\LL(t)$.
\end{proof}

Theorems \ref{t:logdiv} and \ref{t:logconv} immediately yield the following
final result.

\begin{thm} \label{t:log}
Let $\B$ be a discrete time, wide sense stationary process. Let $L$ be a
linear filter with frequency characteristic $\ell(\cdot)$. Then for
every fixed $t\in \Z$
the equality $\sigma^2(t)=\sigma^2(\infty)$ holds iff either
\eqref{logdiv} holds, or \eqref{logconv} holds and
$\frac{1}{1+|\ell(u)|^2}$ is a trigonometric polynomial of degree
not exceeding $t$.
\end{thm}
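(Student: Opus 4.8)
The plan is to exploit the fact that conditions \eqref{logdiv} and \eqref{logconv} are logically complementary: the integral $\int_{-\pi}^\pi |\ln f_a(u)|\, du$ is either infinite (giving \eqref{logdiv}) or finite (giving \eqref{logconv}), and exactly one of these alternatives holds for any fixed process $\B$. This dichotomy is precisely what allows the two preceding theorems to be merged into a single criterion, so the entire argument reduces to a case split followed by an appeal to Theorems \ref{t:logdiv} and \ref{t:logconv}.

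First I would treat the reverse implication. Suppose the right-hand side of the stated equivalence holds. If it holds because \eqref{logdiv} is true, then Theorem \ref{t:logdiv} gives $\sigma^2(t)=\sigma^2(\infty)$ directly, for every $t\in\Z$ and in particular for the fixed one under consideration. If instead it holds because \eqref{logconv} is true together with the requirement that $\frac{1}{1+|\ell(u)|^2}$ be a trigonometric polynomial of degree at most $t$, then the sufficiency half of Theorem \ref{t:logconv} yields $\sigma^2(t)=\sigma^2(\infty)$ as well. Either way the equality holds.

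For the forward implication I would assume $\sigma^2(t)=\sigma^2(\infty)$ and again split on the dichotomy. If \eqref{logdiv} holds, the first disjunct of the right-hand side is already satisfied and there is nothing further to prove. If \eqref{logconv} holds instead, then the necessity half of Theorem \ref{t:logconv} forces $\frac{1}{1+|\ell(u)|^2}$ to be a trigonometric polynomial of degree not exceeding $t$, so the second disjunct holds. Since these two cases are exhaustive, at least one disjunct must hold, completing the equivalence.

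There is no genuine analytic obstacle here: all the substantive work—Kolmogorov's singularity criterion in the divergent case and the Hardy-space factorization argument in the convergent case—was already carried out in the two cited theorems. The only point requiring care is the logical bookkeeping, namely verifying that \eqref{logdiv} and \eqref{logconv} genuinely partition the possibilities (so that the disjunction on the right-hand side is exhaustive and the two cases never overlap) and that the polynomial condition is attached to the correct, convergent, disjunct. Once this is checked, the stated result follows at once.
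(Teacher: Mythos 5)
Your proposal is correct and coincides with the paper's own argument: the paper derives Theorem \ref{t:log} by exactly this dichotomy, stating that Theorems \ref{t:logdiv} and \ref{t:logconv} "immediately yield" the result. Your explicit bookkeeping of the two directions and the exhaustiveness of the case split is just a spelled-out version of what the paper leaves implicit.
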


\subsection{Continuous time}

In this subsection we assume that $(\B(t))_{t\in \R}$ is a continuous time,
mean square continuous, wide sense stationary process and $\mu$ is its spectral
measure. As before, we represent $\mu$ as the sum $\mu=\mu_a+\mu_s$ of
its absolutely continuous and singular components, denote $f_a$ the density
of $\mu_a$ with respect to Lebesgue measure and let
\[
  \sigma^2(t) :=\inf_{Y\in H_t} \left\{ \E|Y-\B(0)|^2+ \E|L Y|^2 \right\},
  \qquad t\in \R,
\]
denote the corresponding prediction errors. We also let $\sigma^2(\infty)$
denote the similar quantity with $H_t$ replaced by $H$.

The statement analogous to Theorem \ref{t:log} is as follows.

\begin{thm} \label{t:log_c}
Let $\B$ be a continuous time, mean square continuous, wide sense stationary process.
Let $L$ be a linear filter with frequency characteristic $\ell(\cdot)$.
Then for every fixed $t\in \R$ the equality
\[
  \sigma^2(t)=\sigma^2(\infty)
  = \int \frac{|\ell(u)|^2}{1+ |\ell(u)|^2} \, \mu(du)
\]
holds iff either

\noindent(a)
\be \label{logdiv_c}
   \int_{-\infty}^{\infty} \frac{|\ln f_a(u)|}{1+u^2}\,  du = \infty
\ee
holds or

\noindent(b)
\be \label{logconv_c}
   \int_{-\infty}^{\infty}  \frac{|\ln f_a(u)|}{1+u^2}\,  du < \infty
\ee
holds, $t>0$ and $\frac{1}{1+|\ell(u)|^2}$ is a restriction (to $\R$) of
an entire analytic function of exponential type not exceeding $t$,
or

\noindent(c) inequality \eqref{logconv_c} holds, $t=0$, and
$|\ell(\cdot)|$ is Lebesgue a.e. equal to a constant.
\end{thm}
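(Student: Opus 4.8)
The plan is to transcribe the discrete argument of Theorems~\ref{t:logdiv} and~\ref{t:logconv} into the upper half-plane $\C^+=\{z:\Im z>0\}$, replacing the Hardy space $\HH_2(\D)$ by $\HH_2(\C^+)$ and trigonometric polynomials by entire functions of exponential type via the Paley--Wiener theorem. As before, Theorem~\ref{t:nonad} together with the uniqueness in Proposition~\ref{p:uniq} identifies the non-adaptive optimizer as $\psi_\infty(u)=\tfrac{1}{1+|\ell(u)|^2}$, so that $\sigma^2(t)=\sigma^2(\infty)$ holds exactly when $\psi_\infty$ lies in $\LL(t):=\span{e^{isu},\, s\le t}$, the closed span taken in $\LL=L_2(\mu)$. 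When \eqref{logdiv_c} holds, the continuous-time analogue of Kolmogorov's criterion, due to M.~G.~Krein (see \cite{Roz}), gives $H_t=H$ for every $t\in\R$, and then Theorem~\ref{t:nonad} yields $\sigma^2(t)=\sigma^2(\infty)=\int\tfrac{|\ell(u)|^2}{1+|\ell(u)|^2}\,\mu(du)$; this settles alternative (a).

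Assume now the convergence condition \eqref{logconv_c}. As in the proof of Theorem~\ref{t:logconv}, I would factor $f_a=|g_*|^2$, where $g_*$ is the boundary value of the outer function
\[
 g(z)=\exp\left\{\frac{1}{2\pi i}\int_{\R}\left(\frac{1}{u-z}-\frac{u}{1+u^2}\right)\ln f_a(u)\,du\right\},\qquad \Im z>0,
\]
which belongs to $\HH_2(\C^+)$ and has $1/g$ again outer; the subtracted term $\tfrac{u}{1+u^2}$ renders the kernel $O(u^{-2})$ at infinity, giving the integrability required by \eqref{logconv_c}. Recall that, by Paley--Wiener, the boundary values of $\HH_2(\C^+)$ are precisely the $L_2(du)$ functions with Fourier spectrum in $[0,\infty)$. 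Now suppose $\vartheta_k\in\LL(t)$ approximate $\psi_\infty$ in $L_2(\mu)$; since $\psi_\infty$ is real we have $|\overline{\vartheta_k}-\psi_\infty|=|\vartheta_k-\psi_\infty|$, so $\overline{\vartheta_k}\to\psi_\infty$ in $L_2(f_a\,du)$. The spectrum of $\overline{\vartheta_k}$ lies in $[-t,\infty)$, hence $e^{itu}\overline{\vartheta_k}\,g_*$ has spectrum in $[0,\infty)$ and, being the image of $\overline{\vartheta_k}$ under the isometry $L_2(|g_*|^2du)\to L_2(du)$ of multiplication by $g_*$, converges in $L_2(du)$ to $e^{itu}\psi_\infty g_*$. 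As the Hardy boundary values form a closed subspace, this limit is the boundary value of some $h\in\HH_2(\C^+)$.

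Setting $A:=h/g$, analytic on $\C^+$ with boundary value $e^{itu}\psi_\infty(u)$, the inner--outer boundedness argument of Theorem~\ref{t:logconv} carries over: writing $h=M_hQ_h$ with $M_h$ inner and $Q_h$ outer, the factor $Q_h/g$ is outer with boundary modulus $\psi_\infty\le 1$, hence bounded on $\C^+$, so that $|A|\le 1$ there. Therefore $\psi_\infty$ extends to $\C^+$ as $\Psi(z):=e^{-itz}A(z)$ with $|\Psi(z)|\le e^{t\,\Im z}$, and since the boundary values $\psi_\infty$ are real, the Schwarz reflection $\Psi(z):=\overline{\Psi(\bar z)}$ continues $\Psi$ across $\R$ to an entire function satisfying $|\Psi(z)|\le e^{t\,|\Im z|}$; thus $\psi_\infty$ is the restriction to $\R$ of an entire function of exponential type $\le t$, which is alternative (b). For $t=0$ this specializes: exponential type $0$ together with boundedness on $\R$ forces $\Psi$ constant by Liouville, i.e. $|\ell|$ constant a.e., which is (c); while for $t<0$ no nonzero entire function has negative type, so the positive $\psi_\infty$ cannot qualify and $\sigma^2(t)<\sigma^2(\infty)$. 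The converse is routine: a bounded entire function of exponential type $\le t$ has, by Paley--Wiener, distributional spectrum in $[-t,t]\subset(-\infty,t]$, and approximating its band-limited Fourier representation by Riemann sums of exponentials $e^{isu}$ with $s\in[-t,t]$ gives, by dominated convergence against the finite measure $\mu$, the membership $\psi_\infty\in\LL(t)$.

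The main obstacle I anticipate is the half-plane bookkeeping: fixing the spectral conventions so that the shift by $e^{itu}$ aligns $\overline{\vartheta_k}\,g_*$ with $\HH_2(\C^+)$, and then upgrading the one-sided growth bound on $\C^+$ to a genuine entire function of the claimed type. The reflection step is the delicate point, since the boundary values are controlled only almost everywhere, so the gluing of the two half-plane extensions must be justified through the closedness of Hardy boundary values (or an edge-of-the-wedge argument) rather than through pointwise continuity. A secondary point to watch is that the approximation controls only the absolutely continuous part $f_a\,du$; one should check, as above, that the forward structural conclusion about $\psi_\infty$ uses $f_a$ alone, the singular part $\mu_s$ re-entering only in the easy converse.
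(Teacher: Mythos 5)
Your overall strategy coincides with the paper's: Krein's singularity criterion for case (a), the outer factorization $f_a=|g_*|^2$, passage to an $h\in\HH_2(\Pi)$ via closedness of Hardy boundary values, the inner--outer factorization giving $|A|\le 1$ on $\Pi$, and Paley--Wiener for the sufficiency. The genuine gap is exactly the step you flag as ``the delicate point'': you invoke Schwarz reflection across $\R$ for a function whose boundary values are real only almost everywhere in the nontangential sense, and you do not supply any mechanism that actually welds the two half-plane extensions into one entire function. This is not a routine detail to be deferred --- it is where the paper does its main technical work. The paper's device is to multiply $A$ by the auxiliary function $\SS(z)=\tfrac{\sin z}{z}\,e^{iz}\in\HH_2(\Pi)$, so that $\SS A\in\HH_2(\Pi)$ has a Fourier representation $\int_0^\infty e^{ixz}q(x)\,dx$; the realness of $\tfrac{\sin u}{u}A_*(u)$ forces the density $q$ to be supported in a compact interval, so $Q(z)=\int e^{ixz}q(x)\,dx$ is entire, and $V=Qe^{-itz}/\SS$ is a single globally defined meromorphic function that agrees with $\AA e^{-itz}$ on $\Pi$ and, by the same construction in $\Pi_-$ together with the identity theorem applied to two meromorphic functions agreeing a.e.\ on $\R$, with the reflected extension on $\Pi_-$; the exponential-type bound then holds on all of $\C$ and Liouville finishes cases (b) and (c). Without this (or a genuinely carried-out edge-of-the-wedge argument), cases (b), (c) and the exclusion of $t<0$ all remain unproven in your write-up.

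A secondary gap sits in your converse for $t>0$: $\psi_\infty=\tfrac{1}{1+|\ell|^2}$ is bounded but in general not in $L_2(\R,du)$, so the $L_2$ Paley--Wiener theorem gives no band-limited Fourier density to approximate by Riemann sums, and a distributional spectrum in $[-t,t]$ is not by itself something you can discretize against $\mu$. The paper repairs this by writing $A_*(u)=A_*(0)+u\,\widetilde{A_*}(u)$ with $\widetilde{A_*}(u)=\tfrac{A_*(u)-A_*(0)}{u}\in L_2(\R,du)$ still of exponential type at most $t$, applying Paley--Wiener to $\widetilde{A_*}$ to obtain $\widetilde{A_*}(u)=\int_{-t}^t a(\tau)e^{i\tau u}\,d\tau$ with $a\in L_1[-t,t]$, and recovering the factor $u$ as the $L_2(\mu)$-limit of the difference quotients $\tfrac{1-e^{-i\delta u}}{i\delta}$, which stay inside $\LL(t)$. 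Finally, your $t<0$ endgame (negative type forces $\Psi\equiv 0$, contradicting positivity of $\psi_\infty$) is a legitimate small variant of the paper's (which instead concludes $\psi_\infty$ is constant and invokes Krein's regularity criterion to exclude constants from $\LL(t)$), but it rests on the same unestablished gluing step.
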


\begin{proof}
%%The proof basically repeats that of Theorem \ref{t:log},
%%therefore we only trace it.
If \eqref{logdiv_c} holds, then by M.G. Krein singularity criterion,
we have $H_t=H$ for all $t\in \R$, see e.g.
\cite[Chapter XII, Section 4]{Doob1} or \cite[Chapter II, Section 2]{Roz},
and the assertion a) of the theorem follows.

Let $\Pi:=\{z\in\C: \Im(z)>0\}$ denote the upper half-plane. If \eqref{logconv_c} holds, then we
have a representation $f_a(u)=|g_*(u)|^2$, where $g_*(u)$ is the boundary value
of the function
\[
   g(z) := \exp\left\{ \frac{1}{2\pi\, i} \int_{-\infty}^\infty
   \ln f_a(u) \frac{1+z u}{z-u}\, \frac{du}{1+u^2} \right\}.
\]
Therefore $g$ is an outer function from Hardy class $\HH_2(\Pi)$;
we refer to \cite[Chapter 8]{Hof} for the facts and definitions mentioned in this subsection
concerning Hardy classes on $\Pi$ and related outer and inner functions.

Let $t\le 0$. The same arguments as those given in the proof of Theorem \ref{t:logconv}
show that
\[
    h_*(u):= \frac{g_*(u)}{1+|\ell(u)|^2}
 \]
is the boundary value of a function $h\in \HH_2(\Pi)$.

The function
\be \label{idenR}
   A_*(u):=\frac{1}{1+|\ell(u)|^2} = h_*(u) \cdot \frac{1}{g_*(u)}\, ,
   \qquad u\in \R,
\ee
admits an analytic continuation from the real line to $\Pi$ given by
$A(z)=h(z)\cdot \tfrac{1}{g(z)}$.

Let us prove that $A(\cdot)$ is bounded on $\Pi$. Write the factorization
$h=M_h\cdot Q_h$ where $M_h$ is an inner function and $Q_h$ is an outer function.
Then $A= M_h\cdot \frac{Q_h}{g}$. The function $M_h$ is bounded  on $\Pi$ by the
definition of an inner function while $\frac{Q_h}{g}$ is an outer function
with bounded boundary values, because Lebesgue-a.e.
\[
   \left|\left(\tfrac{Q_h}{g}\right)_*(u)\right|
   =\left|\frac{A_*(u)}{(M_h)_*(u)}\right|
   = \left| A_*(u)\right|\le 1, \qquad u\in \R.
\]
Since for outer functions the boundedness on the boundary implies,
via Poisson kernel representation, the boundedness on $\Pi$, we see that
the factor $\frac{Q_h}{g}$ is also bounded on $\Pi$. We conclude that
$A(\cdot)$ is bounded on $\Pi$. In other words, $A\in \HH_\infty(\Pi)$.

Furthermore, the function $A$ admits an analytic reflection to the lower
half-plane $\Pi_-:=\{z\in\C: \overline{z}\in \Pi\}$ by letting
$A_-(z):=\overline{A(\overline{z})}$. This reflection agrees with $A$ on the
real line because the boundary values $A_*(u),u\in\R,$ are real.

Consider now an auxiliary function $\SS_*(u):=\tfrac{\sin u}{u}\ e^{iu}$ on $\R$
and its analytic continuation $\SS(z):=\tfrac{\sin z}{z}\ e^{iz} \in \HH_2(\Pi)$.
Then $\SS\cdot A\in \HH_2(\Pi)$ and the corresponding boundary function
$\SS_*\cdot A_*\in L_2(\R)$. According to the Fourier representation of the elements of
$\HH_2(\Pi)$ and that of their boundary values  we have
\begin{eqnarray}
\nonumber
   &&\SS_*(u) A_*(u) =  \frac{\sin u}{u}\ e^{iu} A_*(u)
\\   \label{Hu}
   &=& \int_0^\infty e^{ixu} q(x)\, dx, \qquad u\in \R, \textrm{ Lebesgue-a.e.,}
\\ \nonumber
   && \SS(z) A(z) =  \frac{\sin z}{z}\ e^{iz} A(z)
\\ \label{Hz}
   &=& \int_0^\infty e^{ixz} q(x)\, dx, \qquad z\in \Pi,
\end{eqnarray}
with some $q(\cdot)\in L_2(\R_+)$.
We may rewrite \eqref{Hu} as
\[
  \frac{\sin u}{u}\,  A_*(u) = \int_{-1}^\infty e^{iyu} q(y+1)\,dy.
\]
Moreover, since the left hand side is real, its Fourier transform is symmetric.
Therefore, $q(\cdot+1)$ must vanish on $[1,\infty)$, i.e.
 $q(\cdot)$ must vanish on $[2,\infty)$. Thus \eqref{Hz} writes as
\be \label{SAQ}
  \SS(z)\, A(z) = \int_{0}^2 e^{ixu} q(x)\,dx:=Q(z)
\ee
and $Q$ is an entire function.

Consider the holomorphic function
\[
  V(z):=\frac{Q(z)}{\SS(z)}= \frac{z e^{-iz} Q(z)}{\sin z}.
\]
Since $V=A$ on $\Pi$ and $V=A_*$ on $\R$, we see that $V$ is bounded on $\Pi\cup\R$.

Proceeding in the same way with the lower half-plane $\Pi_-$ instead of $\Pi$, we find "another"
holomorphic function $V_-$ such that $V_-=A_-$ on $\Pi_-$ and $V_-=A_*=V$ on $\R$. The latter
equality yields $V=V_-$ on $\C$; moreover, $V$ is bounded on $\C$. Hence $V$ is a constant and
$A_*=V$ is a constant, too, as required in the assertion c) of our theorem.
\medskip

If \eqref{logconv_c} holds and $t>0$, the same reasoning leads to
a representation analogue to \eqref{idenR}, namely,
\[
   A_*(u):=\frac{1}{1+|\ell(u)|^2} = e^{-itu} \AA_*(u)\, ,
   \qquad u\in \R,
\]
where $\AA_*$ is the boundary function of some $\AA\in\HH_\infty(\Pi)$.

Using that $\SS\cdot\AA\in \HH_2(\Pi)$
and proceeding as before, we have a representation analogue to \eqref{SAQ}
\[
  \SS(z)\, \AA(z) = \int_{0}^{2(t+1)} e^{izu} q(x)\,dx:=Q(z)
\]
and $Q$ is an entire function. Let
\[
  V(z)= \frac{Q(z)e^{-itz}}{\SS(z)}= \AA(z) e^{-itz}.
\]
Then
$V=A_*$ on $\R$ and we have
\[
  |V(z)| \le e^{t|\Im(z)|} ||A||_\infty\, , \qquad z\in \Pi.
\]

Proceeding in the same way with the lower half-plane $\Pi_-$ instead of $\Pi$, we find "another"
holomorphic function $V_-$ such that
\[
  |V_-(z)| \le e^{t|\Im(z)|} ||A_-||_\infty\ , \qquad z\in \Pi_-,
\]
and $V_-=A_*=V$ on $\R$ . The latter equality yields $V=V_-$ on $\C$. We conclude that
\[
  |V(z)| \le e^{t|\Im(z)|} ||A||_\infty\, , \qquad z\in \C,
\]
i.e. $V$ is an entire function of exponential type not exceeding $t$,
as required in the assertion b) of our theorem.
\medskip

If \eqref{logconv_c} holds and $t<0$, we still see from the previous reasoning that
$\psi_\infty$ must be a constant.
Due to M.G. Krein's regularity criterion  (see \cite[Chapter III, Theorem 2.4]{Roz}),
we know that under \eqref{logconv_c}
constants do not belong to $\LL(t)$ with $t<0$. Hence the equality
$\sigma^2(t)=\sigma^2(\infty)$ is not possible for $t<0$.
\medskip

Now we prove the sufficiency. Assume that $t>0$ and that
$A_*(u):=\frac{1}{1+|\ell(u)|^2}$ is a restriction (to $\R$) of
an entire analytic function $A(\cdot)$ of exponential type not exceeding $t$.

Write
\[
  A_*(u)=A_*(0)+ u \cdot \frac{A_*(u)-A_*(0)}{u}:= A_*(0)+u \cdot \widetilde{A_*}(u).
\]
It is  sufficient to show that the function $u\mapsto u \cdot \widetilde{A_*}(u)$ belongs to $H_t$.
Furthermore, since we have
\[
   \frac{1-e^{-i\delta u}}{i\delta} \widetilde{A_*}(u) \to  u \cdot \widetilde{A_*}(u),
   \qquad \textrm{as } \delta\to 0,
\]
in $L_2(\R,\mu)$, it is sufficient to show that $\widetilde{A_*}\in H_t$.

Notice that $\widetilde{A_*}$ belongs to $L_2(\R^1)$ w.r.t. Lebesgue measure and is a restriction
of the analytic function of exponential type not exceeding $t$ given by $\widetilde{A}(z):=\frac{A(z)-A(0)}{z}$.
Hence, by Paley--Wiener theorem  (see \cite[Chapter IV]{Akh} we have a representation
\[
   \widetilde{A_*}(u) = \widetilde{A}(u) = \int_{-t}^t a(\tau) e^{i\tau u} d\tau
\]
with $a(\cdot)\in L_2[-t,t]\subset L_1[-t,t]$.
Since exponentials $u \mapsto e^{i\tau u}$ belong to $H_t$ as $\tau\le t$, it follows that
$\widetilde{A_*}\in H_t$.

This concludes the proof of our theorem.
\end{proof}

\begin{rem}
{\rm In 1923, S.N. Bernstein introduced a class of entire functions of exponential type
not exceeding $t$ and bounded on the real line, cf. \cite{Ber} or \cite[Chapter 4, Section 83]{Akh}.
The functions that appear in Theorem \ref{t:log_c} belong to this class.
}
\end{rem}

%% } %%% end large
%%\pagebreak

\subsection{An extension of regularity criterion}

Consider again the discrete time case. We handle  wide sense stationary sequences
and use the notation from Subsection \ref{ss:discr}. Let
$\sigma_0^2(t)$ be the prediction error for $\B(0)$ given the past $H_{t}$
in the classical prediction problem (with $L=0$).
%% If $\B$ is singular, then  $\sigma_0^2(t)=0$ for all $t>0$, while
The sequence $\B$ is called {\it regular}, if we have
\[
   \lim_{t\to -\infty} \sigma_0^2(t) = \E |\B(0)|^2.
\]
By the classical Kolmogorov regularity criterion, see
\cite[Chapter II, Theorem 5.1]{Roz}, a   wide sense stationary sequence $\B$ is regular
iff its spectral measure is absolutely continuous and \eqref{logconv} is
verified.

The following result provides an extension of this assertion to our
settings.

\begin{thm}
Let $\B$ be a wide sense stationary sequence. Let $L$ be a linear filter with
frequency characteristic $\ell(\cdot)$. If \eqref{logconv} holds, then
\[
      \lim_{t\to-\infty} \sigma^2(t) = \E |\B(0)|^2
      - \int_{-\pi}^\pi \frac{\mu_s(du)}{1+|\ell(u)|^2}\ .
\]
\end{thm}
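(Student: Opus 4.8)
The plan is to pass to the analytic form of the problem and to compute the limit as a minimization over the remote past. Writing $\LL(t)=\span{e^{isu}, s\le t}$ as in the proof of Theorem \ref{t:logconv}, I would record
\[
  \sigma^2(t) = \inf_{\psi\in\LL(t)} G(\psi), \qquad
  G(\psi):=\int_{-\pi}^\pi|\psi(u)-1|^2\,\mu(du)+\int_{-\pi}^\pi|\ell(u)\psi(u)|^2\,\mu(du).
\]
Since the subspaces $\LL(t)$ decrease as $t\to-\infty$, the quantities $\sigma^2(t)$ increase and are bounded by $G(0)=\E|\B(0)|^2$, so the limit exists. The first step is to establish
\[
  \lim_{t\to-\infty}\sigma^2(t)=\inf_{\psi\in\LL(-\infty)}G(\psi),
  \qquad \LL(-\infty):=\bigcap_{t\in\Z}\LL(t).
\]
The inequality ``$\le$'' is immediate from $\LL(-\infty)\subset\LL(t)$. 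For ``$\ge$'' I would take the optimal $\psi_t\in\LL(t)$ (existing and unique by Propositions \ref{p:exist} and \ref{p:uniq}); the bound $G(\psi_t)\le\E|\B(0)|^2$ controls $\psi_t$ in the graph norm $\psi\mapsto\|\psi\|_\LL^2+\|L\psi\|_\LL^2$, so along a subsequence $\psi_{t_k}$ converges weakly in the graph Hilbert space to some $\psi_*$. As each $\LL(t)\cap\DD(L)$ is closed, hence weakly closed, in the graph space, the limit $\psi_*$ lies in every $\LL(t)$, i.e. $\psi_*\in\LL(-\infty)\cap\DD(L)$; weak lower semicontinuity of the convex continuous functional $G$ then gives $\inf_{\LL(-\infty)}G\le G(\psi_*)\le\lim_t\sigma^2(t)$.

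The second step is to identify the remote past. Fix a Borel set $S$ with $\mu_a(S)=0$ and $\mu_s(S^c)=0$, so that $\LL=L_2(\mu)$ splits as $L_2(\mu_a)\oplus L_2(\mu_s)$, the latter realized as the functions vanishing $\mu_a$-a.e. (supported on $S$). Under the hypothesis \eqref{logconv} the absolutely continuous part of the spectrum is purely regular, while the singular part is deterministic; by the spectral form of the Wold decomposition (cf.\ \cite[Chapter II]{Roz}) this yields $\LL(-\infty)=L_2(\mu_s)$.

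The final step is to evaluate $\inf_{\psi\in L_2(\mu_s)}G(\psi)$. For $\psi$ vanishing $\mu_a$-a.e.\ one has $\int|\psi-1|^2\mu_a=\mu_a([-\pi,\pi))$ and $\int|\ell\psi|^2\mu_a=0$, so only the singular part has to be optimized. Applying the pointwise full-square identity from the remark after Theorem \ref{t:nonad} with $\mu$ replaced by $\mu_s$ shows that the minimum over the singular part is attained at $\psi=\tfrac{1}{1+|\ell|^2}$ (which indeed lies in $L_2(\mu_s)\cap\DD(L)$) and equals $\int\tfrac{|\ell|^2}{1+|\ell|^2}\,\mu_s$. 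Hence
\[
  \lim_{t\to-\infty}\sigma^2(t)
  =\mu_a([-\pi,\pi))+\int_{-\pi}^\pi\frac{|\ell(u)|^2}{1+|\ell(u)|^2}\,\mu_s(du),
\]
and I would rewrite this, using $\E|\B(0)|^2=\int\mu$ together with $\int\mu_s-\int\tfrac{\mu_s}{1+|\ell|^2}=\int\tfrac{|\ell|^2}{1+|\ell|^2}\mu_s$, as the claimed value $\E|\B(0)|^2-\int_{-\pi}^\pi\tfrac{\mu_s(du)}{1+|\ell(u)|^2}$. The hard part will be the first step: because $L$ need not be bounded, the usual continuity of orthogonal projections in $\LL$ is unavailable, and the interchange of the limit with the infimum must instead be justified by the weak-compactness argument in the graph space. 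The identification $\LL(-\infty)=L_2(\mu_s)$, extracted from classical prediction theory under \eqref{logconv}, is the other point requiring care.
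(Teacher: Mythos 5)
Your proposal is correct and reaches the right value, but it gets there by a genuinely different route than the paper. Both arguments rest on the same classical ingredient from Rozanov: under \eqref{logconv} the process splits into mutually orthogonal regular and singular parts with spectral measures $\mu_a$ and $\mu_s$ and with subordinate past subspaces. The paper, however, never identifies the remote past and never interchanges the limit with the infimum. It uses the subordination to split the whole functional, obtaining $\sigma^2(t)=\sigma^{(a)2}(t)+\sigma^{(s)2}(t)$; the singular summand is constant in $t$ by Theorem \ref{t:logdiv} (the singular part has $f_a=0$, so the log-integral diverges), and the absolutely continuous summand is squeezed between the classical prediction error (which tends to $\E|B^{(a)}(0)|^2$ by regularity) and the trivial upper bound coming from $Y=0$. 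Your route instead proves $\lim_{t\to-\infty}\sigma^2(t)=\inf_{\psi\in\bigcap_t\LL(t)}G(\psi)$, identifies $\bigcap_t\LL(t)=L_2(\mu_s)$, and minimizes there directly. The step you flag as hard is indeed the only delicate one, and your weak-compactness argument in the graph space of $L$ handles it correctly: the optimal $\psi_t$ are bounded in graph norm since $G(\psi_t)\le G(0)=\E|\B(0)|^2$, each $\LL(s)\cap\DD(L)$ is a graph-closed, hence weakly closed, convex set, and $G$ is convex and graph-continuous, hence weakly lower semicontinuous. What your approach buys is a conceptually cleaner statement (the limiting problem is literally the problem posed on the remote past) at the cost of extra functional-analytic machinery; what the paper's approach buys is elementarity -- the monotone squeeze on the regular part avoids any compactness argument -- and direct reuse of Theorem \ref{t:logdiv}. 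The final bookkeeping, $\mu_a([-\pi,\pi))+\int\frac{|\ell|^2}{1+|\ell|^2}\,\mu_s(du)=\E|\B(0)|^2-\int\frac{\mu_s(du)}{1+|\ell(u)|^2}$, matches the paper's.
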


\begin{proof}
Consider first the sequences with absolutely continuous spectral measure.
In that case, by Kolmogorov's criterion, $\B$ is regular. Hence,
%%(see \cite{Doob1, Roz}) the space of infinite past
%%$\bigcap_{t<0} H_{t}$  contains only constants and
\[
   \lim_{t\to-\infty} \sigma^2(t)
   \ge \lim_{t\to-\infty} \inf_{Y\in H_t} \E|Y-\B(0)|^2
   = \E|\B(0)|^2.
\]
On the other hand, since for each $t\in\Z$ it is true $Y=0\in H_{t}$,
we have $\sigma^2(t) \le \E|\B(0)|^2$
and the theorem is proved in the form
\be \label{regabsc}
   \lim_{t\to-\infty} \sigma^2(t) = \E|\B(0)|^2.
\ee

In the general case,
%%Let us represent the underlying orthogonal measure $\W$ as a sum of mutually
%%orthogonal measures $\W=\W^{(a)}+\W^{(s)}$ such that
%%$\E |\W^{(a)}|^2(du)=\mu_a(du)$ and $\E |\W^{(s)}|^2(du)=\mu_s(du)$.
%%
by  \cite[Chapter II, Theorem 2.2]{Roz} our sequence splits into a sum
$B=B^{(a)}+B^{(s)}$ of mutually orthogonal wide sense stationary processes such that
the regular part $B^{(a)}$ has the spectral measure $\mu_a$, the singular part
$B^{(s)}$ has the spectral measure $\mu_s$ and
the corresponding spaces
$H_{t}^{(a)}:=\span{B^{(a)}(v),v\le t}$,
$H_{t}^{(s)}:=\span{B^{(s)}(v), v\le t}$ are not only orthogonal but
also satisfy subordination inclusions $H_{t}^{(a)}\subset H_t$,
$H_{t}^{(s)}\subset H_t$. The latter yield
$H_t=H_{t}^{(a)}\oplus H_{t}^{(s)}$.

Moreover, for any $\xi \in H_t$ representation \eqref{L} implies
\begin{eqnarray*}
   \E |L\xi|^2 &=& \int_{-\pi}^{\pi} |\ell(u)\phi_\xi(u)|^2\mu(du)
\\
   &=&  \int_{-\pi}^{\pi} |\ell(u)\phi_\xi(u)|^2\mu_a(du)
      + \int_{-\pi}^{\pi} |\ell(u)\phi_\xi(u)|^2\mu_s(du)
\\
    &=& \E |L\xi^{(a)}|^2 + \E |L\xi^{(s)}|^2,
\end{eqnarray*}
where $\xi^{(a)}, \xi^{(s)}$ denote the projections of $\xi$
onto $H_{t}^{(a)}$ and $H_{t}^{(s)}$, respectively.

Therefore, for any $\xi\in H_t$ we have
\begin{eqnarray*}
   &&\E |\xi-\B(0)|^2+ \E|L\xi|^2
\\
   &=&  \E |\xi^{(a)}-B^{(a)}(0)|^2+ \E |\xi^{(s)}-B^{(s)}(0)|^2
\\
&&   + \E|L \xi^{(a)}|^2 + \E|L \xi^{(s)}|^2.
\end{eqnarray*}

In this situation, the minimization problem splits into two independent
ones and is solved by $\xi_t= \xi_t^{(a)}+\xi_t^{(s)}$, where
$\xi_t^{(a)},\xi_t^{(s)}$ are the solutions for the processes
$ B^{(a)}, B^{(s)}$, respectively.

For the extended prediction errors
we obtain, by using Theorem \ref{t:logdiv},
\be \label{sigma_as}
  \sigma^2(t)= \sigma^{(a)2}(t) + \sigma^{(s)2}(t)
  =  \sigma^{(a)2}(t) + \sigma^{(s)2}(\infty).
\ee
%%\\ \nonumber
%%  \sigma^2(\infty) &=& \sigma^{(a)2}(\infty) + \sigma^{(s)2}(\infty).
%%\end{eqnarray}
%%Therefore, $\sigma^2(t)=\sigma^2(\infty)$ is equivalent to
%%$\sigma^{(a)2}(t)=\sigma^{(a)2}(\infty)$.

By applying consequently \eqref{sigma_as} and \eqref{regabsc}, we have
\begin{eqnarray*}
   \lim_{t\to-\infty} \sigma^2(t)&=&
    \lim_{t\to-\infty} \sigma^{(a)2}(t)+ \sigma^{(s)2}(\infty)
\\
   &=&  \E|B^{(a)}(0)|^2
       + \int_{-\pi}^\pi \frac{|\ell(u)|^2 \mu_s(du)}{1+|\ell(u)|^2}
\\
   &=&  \E|\B(0)|^2 - \E|B^{(s)}(0)|^2
        + \int_{-\pi}^\pi \frac{|\ell(u)|^2\mu_s(du)}{1+|\ell(u)|^2}
\\
   &=&  \E|\B(0)|^2 + \int_{-\pi}^\pi
        \left(\frac{|\ell(u)|^2}{1+|\ell(u)|^2}-1\right) \mu_s(du)
\\
   &=&  \E|\B(0)|^2 -
   \int_{-\pi}^\pi \frac{ \mu_s(du)}{1+|\ell(u)|^2}\, ,
\end{eqnarray*}
as claimed.
\end{proof}

%%%%%%%%%%%%%%%%%%%%%%%%%%%%%%%%%%%%%%%%%%%%%%%%%%%%%%%%%%%%%%%%%%%%%%%

\section{Interpolation}

Consider the simplest case of interpolation problem (our Problem III)
in discrete time. Let $(B(t))_{t\in \Z}$ be a  wide sense stationary sequence having
spectral density $f$ and let $L$ be a linear filter with frequency
characteristic $\ell(\cdot)$. Consider the extremal problem
\[
    \E |Y-\B(0)|^2+ \E|LY|^2 \to \min, \qquad Y\in \HTO_1.
\]
Recall that $\HTO_1=\span{\B(s),|s|\ge 1}$.
Let
\[
   \sigma_{\textrm{int}}^2   = \inf_{Y\in \HTO_1} \left(\E |Y-\B(0)|^2+ \E|LY|^2\right)
\]
denote the interpolation error.

The classical case of this problem, i.e. $L=0$, was considered by A.N. Kolmogorov
 \cite{Kolm}. He proved that precise extrapolation with $ \sigma_{\textrm{int}}^2=0$ is possible iff
 \be \label{interdiv}
     \int_{-\pi}^\pi \frac{du}{f(u)} =\infty.
 \ee
If the integral in \eqref{interdiv} is convergent, then
\[
    \sigma_{\textrm{int}}^2 = 4 \pi^2 \left( \int_{-\pi}^\pi \frac{du}{f(u)}\right)^{-1}.
\]
We extend this result to the case of general $L$ as follows.

\begin{thm}
If \eqref{interdiv} holds, then
\[
  \sigma_{\textrm{int}}^2
  =   \int_{-\pi}^\pi \frac{|\ell(u)|^2}{1+|\ell(u)|^2} \ f(u)\, du.
\]
Otherwise,
\begin{eqnarray*}
    \sigma_{\textrm{int}}^2
  &=&   \int_{-\pi}^\pi \frac{|\ell(u)|^2 f(u) du}{1+|\ell(u)|^2}
\\
  &&  + \left( \int_{-\pi}^\pi \frac{du}{1+|\ell(u)|^2} \right)^2
      \left( \int_{-\pi}^\pi \frac{du}{f(u)(1+|\ell(u)|^2)} \right)^{-1}.
\end{eqnarray*}
\end{thm}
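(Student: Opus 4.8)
The plan is to reduce the interpolation problem to an orthogonal projection in a weighted $L_2$-space and then to a single hyperplane-distance computation. Passing to the analytic form, one must minimize
\[
   G(\psi) = \int_{-\pi}^\pi |\psi(u)-1|^2 f(u)\,du
   + \int_{-\pi}^\pi |\ell(u)\psi(u)|^2 f(u)\,du
\]
over $\psi\in\span{e^{isu},\,|s|\ge 1}\cap\DD(L)$, the span being taken in $\LL=L_2(\mu)$ with $\mu(du)=f(u)\,du$. The first move is to apply the full-square identity already used after Theorem \ref{t:nonad},
\[
   |\psi-1|^2+|\ell\psi|^2
   = (1+|\ell|^2)\Big|\psi-\tfrac{1}{1+|\ell|^2}\Big|^2
   + \tfrac{|\ell|^2}{1+|\ell|^2},
\]
which rewrites $G(\psi)=\|\psi-\psi_\infty\|_{\widetilde\mu}^2+\sigma^2(\infty)$, where $\psi_\infty(u)=\tfrac{1}{1+|\ell(u)|^2}$ is the non-adaptive optimizer from \eqref{sol_na_anal}, $\sigma^2(\infty)=\int\tfrac{|\ell|^2}{1+|\ell|^2}f\,du$ is the error from \eqref{errna}, and $\widetilde\mu(du):=(1+|\ell(u)|^2)f(u)\,du$. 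Thus the problem becomes that of finding the element of the constraint set closest, in the norm of $L_2(\widetilde\mu)$, to the fixed function $\psi_\infty$.

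Second, I would pin down the constraint set. Since $\widetilde\mu\ge\mu$, we have $\DD(L)=L_2(\widetilde\mu)\subset\LL$, so the admissible $\psi$ are exactly those elements of $L_2(\widetilde\mu)$ lying in $\HTO_1$. Here the classical Kolmogorov interpolation theory enters: the orthogonal complement of $\HTO_1=\span{e^{isu},|s|\ge1}$ in $\LL$ is spanned by $1/f$ when $\int_{-\pi}^\pi du/f<\infty$ and is trivial when $\int_{-\pi}^\pi du/f=\infty$. Accordingly I split into two cases. If \eqref{interdiv} holds, then $\HTO_1=\LL$, the constraint is vacuous, $\psi_\infty\in L_2(\widetilde\mu)$ is itself admissible, and the minimum of $G$ equals $\sigma^2(\infty)=\int\tfrac{|\ell|^2}{1+|\ell|^2}f\,du$, which is the first assertion. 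If \eqref{interdiv} fails, membership $\psi\in\HTO_1$ is equivalent to the single orthogonality relation $(\psi,1/f)_\LL=0$, i.e.\ to the linear condition $\int_{-\pi}^\pi\psi(u)\,du=0$.

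Third, I would solve the resulting constrained quadratic problem: minimize $\|\psi-\psi_\infty\|_{\widetilde\mu}^2$ over $\psi\in L_2(\widetilde\mu)$ subject to $\int\psi\,du=0$. Writing the constraint as $\langle\psi,\kappa\rangle_{\widetilde\mu}=0$ with $\kappa(u):=\tfrac{1}{(1+|\ell(u)|^2)f(u)}$ (so that $\int\psi\,du=\langle\psi,\kappa\rangle_{\widetilde\mu}$), the admissible set is the kernel of $\psi\mapsto\langle\psi,\kappa\rangle_{\widetilde\mu}$, and the minimum is the squared distance from $\psi_\infty$ to this hyperplane, namely $|\langle\psi_\infty,\kappa\rangle_{\widetilde\mu}|^2/\|\kappa\|_{\widetilde\mu}^2$. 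A direct computation gives $\langle\psi_\infty,\kappa\rangle_{\widetilde\mu}=\int\tfrac{du}{1+|\ell|^2}$ and $\|\kappa\|_{\widetilde\mu}^2=\int\tfrac{du}{f(1+|\ell|^2)}$, whence the extra term $\big(\int\tfrac{du}{1+|\ell|^2}\big)^2\big(\int\tfrac{du}{f(1+|\ell|^2)}\big)^{-1}$; adding $\sigma^2(\infty)$ yields the stated formula. Equivalently, the optimal filter is $\psi_*=\psi_\infty+C\kappa$ with $C$ fixed by $\int\psi_*\,du=0$, which one may also obtain from the Euler equation of Proposition \ref{p:euler}.

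The main obstacle I anticipate lies in the second and third steps: one must verify that the functional $\psi\mapsto\int\psi\,du$ is bounded on $L_2(\widetilde\mu)$, so that its kernel is closed and the orthogonal projection is legitimate. This is where the case $\int du/f<\infty$ is genuinely used, since boundedness amounts to $\kappa\in L_2(\widetilde\mu)$, i.e.\ $\int\tfrac{du}{f(1+|\ell|^2)}<\infty$, and the latter follows from $1+|\ell|^2\ge1$ together with $\int du/f<\infty$; the same inequality guarantees finiteness of all constants in the final formula. Existence and uniqueness of the minimizer are covered by Propositions \ref{p:exist} and \ref{p:uniq}, since operators of the form \eqref{L} are closed. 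One should only take care that the characterization of $\HTO_1$ uses the closure in $\LL=L_2(\mu)$, not in the weighted space $L_2(\widetilde\mu)$: the two closures differ, and only the former produces the correct dichotomy at $\int du/f$.
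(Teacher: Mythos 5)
Your argument is correct and reaches the same optimizer $\phi(u)=\frac{c+f(u)}{f(u)(1+|\ell(u)|^2)}$ and the same value, but by a genuinely different mechanism than the paper. The paper guesses this $\phi$, then verifies three things directly: $\xi\in\DD(L)$, $\xi\in\HTO_1$ (via an orthogonal decomposition $\phi=\psi+\psi^{\bot}$ in $\LL$, showing $\psi^{\bot}f$ is constant and hence zero because $\int\phi\,du=0$), and the Euler equations of Proposition \ref{p:euler}, which here reduce to $\int h\,du=0$ for $h\in\LLO$. You instead complete the square to write the target functional as $\|\psi-\psi_\infty\|^2_{\widetilde\mu}+\sigma^2(\infty)$ with $\widetilde\mu=(1+|\ell|^2)f\,du$, identify the admissible set with the closed hyperplane $\{\psi\in L_2(\widetilde\mu):\langle\psi,\kappa\rangle_{\widetilde\mu}=0\}$, $\kappa=\frac{1}{(1+|\ell|^2)f}$, and read off the answer as a distance to a hyperplane. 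What your route buys is that the solution and the extra term $\bigl(\int\frac{du}{1+|\ell|^2}\bigr)^2\bigl(\int\frac{du}{f(1+|\ell|^2)}\bigr)^{-1}$ are \emph{derived} rather than verified, and the split of the error into $\sigma^2(\infty)$ plus a projection penalty is conceptually transparent; what it costs is that you must import the full classical characterization of $(\LLO)^{\bot}$ as the span of $1/f$ (equivalently $\LLO=\{\psi\in\LL:\int\psi\,du=0\}$ when $\int du/f<\infty$), whereas the paper only proves the two inclusions it actually needs. Your closing cautions are exactly the right ones: the functional $\psi\mapsto\int\psi\,du$ is bounded on $L_2(\widetilde\mu)$ precisely because $\kappa\in L_2(\widetilde\mu)$, which follows from $\int du/f<\infty$ and $1+|\ell|^2\ge 1$, and the dichotomy for $\HTO_1$ must be read in $\LL=L_2(\mu)$, not in the weighted space. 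Both halves of the dichotomy are handled the same way as in the paper, so there is no gap.
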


\begin{proof}
If \eqref{interdiv} holds, then by Kolmogorov's theorem we have
$\B(0)\in \HTO_1$, thus $\HTO_1=H$ and by \eqref{errna}
\begin{eqnarray*}
   \sigma_{\textrm{int}}^2
   &=&  \inf_{Y\in H} \left( \E |Y-\B(0)|^2+ \E|LY|^2\right)
\\
   &=&    \int_{-\pi}^\pi \frac{|\ell(u)|^2}{1+|\ell(u)|^2} \ f(u)\, du,
\end{eqnarray*}
proving the first assertion of the theorem.

Assume now that
 \be \label{interconv}
     \int_{-\pi}^\pi \frac{du}{f(u)} <\infty.
 \ee
Let us define a function $\phi$ on $[-\pi,\pi)$
by the relations
\begin{eqnarray*}
  \phi(u) &:=& \frac {c+f(u)}{f(u)(1+|\ell(u)|^2)}\ ,
\\
   c &=&  - \left( \int_{-\pi}^\pi \frac{du}{1+|\ell(u)|^2} \right)
            \left( \int_{-\pi}^\pi \frac{du}{f(u)(1+|\ell(u)|^2)} \right)^{-1}.
\end{eqnarray*}
We will prove that the random variable
\[
   \xi = \int_{-\pi}^\pi \phi(u)\, \W(du)
\]
solves our interpolation problem. To this aim, according to Propositions \ref{p:exist},
\ref{p:uniq}, and \ref{p:euler}, it is sufficient to prove that $\xi\in \DD(L)$, that
$\xi\in\HTO_1$, and that $\xi$ satisfies equations \eqref{euler} of Proposition \ref{p:euler}.

First, we have
\begin{eqnarray*}
  \E|L\xi|^2 &=& \int_{-\pi}^\pi |\phi(u)\ell(u)|^2 f(u) du
\\
    &=& \int_{-\pi}^\pi   \frac {(c+f(u))^2}{f(u)}\ \frac{|\ell(u)|^2 }{(1+|\ell(u)|^2)^2} \ du
\\
    &\le&   \frac{1}{4} \int_{-\pi}^\pi \frac {(c+f(u))^2}{f(u)} \, du
\\
   &=&    \frac{c^2}{4} \int_{-\pi}^\pi \frac {du}{f(u)} +  \pi\,c
          +  \frac{1}{4} \int_{-\pi}^\pi f(u) \, du <\infty,
\end{eqnarray*}
whence  $\xi\in \DD(L)$.

Second, we show that $\xi\in\HTO_1$. Consider an orthogonal decomposition
$\xi=\eta+\eta^{\bot}$ with $\eta\in\HTO_1$, $\eta^{\bot}\in(\HTO_1)^{\bot}$.
We also have the corresponding analytic decomposition
$\phi=\psi+\psi^{\bot}$ with
$\psi\in \LLO :=\span{e^{isu},|s|\ge 1}$
and $\psi^{\bot}\in (\LLO)^{\bot}$.

Let us show that any $h\in \LLO$ satisfies equation
\be \label{h0}
   \int_{-\pi}^\pi h(u) \, du  = 0.
\ee
Indeed under assumption \eqref{interconv} the linear functional
$h\mapsto  \int_{-\pi}^\pi h(u) \, du$ is bounded and continuous
on $\LL$ because by H\"older's inequality
\begin{eqnarray*}
  \left|  \int_{-\pi}^\pi h(u) \, du \right|^2
  &\le& \int_{-\pi}^\pi \frac{du}{f(u)}\ \int_{-\pi}^\pi |h(u)|^2 f(u) \, du
\\
   &=& \int_{-\pi}^\pi \frac{du}{f(u)} \ \, ||h||_\LL^2 .
\end{eqnarray*}
Therefore, equality \eqref{h0} which is true for every exponent from the set
$\{h(u)=e^{isu}, |s|\ge 1\}$, extends  to their span $\LLO$.
In particular, we obtain
\[
  \int_{-\pi}^\pi \psi(u) \, du  = 0.
\]
Since the constant $c$ in the definition of $\phi$ was chosen so that
 \be \label{phi0}
    \int_{-\pi}^\pi \phi(u) \, du  = 0,
\ee
we obtain
\be \label{psi0}
  \int_{-\pi}^\pi \psi^{\bot}(u) \, du  =  \int_{-\pi}^\pi (\phi-\psi)(u)\, du = 0.
\ee
On the other hand,  we have $\psi^{\bot} f \in L_1[-\pi,\pi)$ and
$\psi^{\bot}\in (\LLO)^{\bot}$. The latter means that
\[
   \int_{-\pi}^\pi \psi^{\bot}(u) e^{isu} f(u) \, du =0, \qquad s\in \Z, s\not= 0.
\]
Therefore $\psi^{\bot} f$ is a constant, say,  $\psi^{\bot} f=a$. By plugging
$\psi^{\bot}= \frac{a}{f}$ into \eqref{psi0} we obtain $a=0$.
It follows that $\psi^{\bot}=0$ and $\phi=\psi\in \LLO$, which is equivalent to
$\xi\in \HTO_1$.

It remains to check that $\xi$ satisfies equations \eqref{euler}. The analytical
form of these equations is
\be \label{euler_inter}
    \int_{-\pi}^\pi \left[ \phi(u)-1+|\ell(u)|^2 \phi(u) \right] h(u) f(u) du =0,
    \qquad h\in\LLO.
\ee
By the definition of $\phi$ we have
\[
   \phi(u)-1+|\ell(u)|^2 \phi(u) =\frac{c}{f(u)}.
\]
Therefore, \eqref{euler_inter} reduces to
\[
   \int_{-\pi}^\pi h(u) \, du  = 0,  \qquad h\in\LLO.
\]
The latter was already verified in \eqref{h0}, and we have proved that $\xi$ is a solution of
interpolation problem. Now the direct computation using the definitions of $\phi$ and $c$,
as well as \eqref{phi0}, shows
\begin{eqnarray*}
\sigma_{\textrm{int}}^2
&=&  \int_{-\pi}^\pi  \left[(\phi(u)-1)^2+|\ell(u)|^2 \phi(u)^2 \right] f(u)\, du
\\
&=&  \int_{-\pi}^\pi  \left[\phi(u)^2(1+|\ell(u)|^2)  + (1-2\phi(u)) \right] f(u)\, du
\\
&=&  \int_{-\pi}^\pi  \left[\phi(u)(c+f(u))  + (1-2\phi(u)) f(u) \right] du
\\
&=&  \int_{-\pi}^\pi  (1-\phi(u)) f(u) du
= \int_{-\pi}^\pi  \frac{ |\ell(u)|^2f(u)- c }{1+|\ell(u)|^2}\, du
\\
&=& \int_{-\pi}^\pi  \frac{ |\ell(u)|^2f(u)}{1+|\ell(u)|^2}\, du
\\
 && + \left( \int_{-\pi}^\pi \frac{du}{1+|\ell(u)|^2} \right)^2
      \left( \int_{-\pi}^\pi \frac{du}{f(u)(1+|\ell(u)|^2)} \right)^{-1},
\end{eqnarray*}
 as claimed in the theorem's assertion.
\end{proof}

\begin{rem} {\rm
   The first term in $\sigma_{\textrm{int}}^2$ is just the optimal error \eqref{errna} in the
   easier optimization problem with $Y\in H$. The second term is the price we must pay
   for optimization over the smaller space $\HTO_1$ instead of $H$.
}
\end{rem}

\section{Proofs for abstract Hilbert space setting}
\label{s:HSproofs}

\begin{proof}[ of Proposition \ref{p:exist}]
Not that the set $H_0 \bigcap \DD(L)$ is non-empty, since it contains zero.
Let
\[
   \sigma^2:= \inf_{y\in H_0 \bigcap \DD(L)} G(y).
\]
There exists a sequence $(\xi_n)_{n\in \N}$ in $H_0 \bigcap \DD(L)$ such that
\[
  \lim_{n\to\infty} G(\xi_n) =\sigma^2.
\]
Clearly, for all  $m,n\in \N$ we have $(\xi_m+\xi_n)/2\in H_0 \bigcap \DD(L)$
and by convexity of $||\cdot||^2$,
\begin{eqnarray*}
   \left\|\frac{\xi_m+\xi_n}{2} - x\right\|^2
   &\le& \frac 12 \ \left( ||\xi_m-x||^2 +  ||\xi_n-x||^2   \right),
\\
    \left\| L\left( \frac{\xi_m+\xi_n}{2} \right) \right\|^2
    &=& \frac 14 \ \left\| L \xi_m +L \xi_n \right\|^2
\\
    &\le& \frac 12 \ \left( ||L \xi_m||^2 +  ||L \xi_n||^2   \right).
\end{eqnarray*}
It follows that
\[
   \limsup_{m,n\to\infty} G\left(  \frac{\xi_m+\xi_n}{2} \right)
   \le \frac 12 \left( \lim_{m\to\infty} G(\xi_m) + \lim_{n\to\infty} G(\xi_n)\right)
   =\sigma^2.
\]
Hence,
\[
   \lim_{m,n\to\infty} G\left(\frac{\xi_m+\xi_n}{2} \right) =\sigma^2.
\]
The parallelogram identity
\be \label{para}
     2||f||^2 +2 ||g||^2 = ||f+g||^2+||f-g||^2
\ee
yields
\begin{eqnarray*}
     &&  2 \left( G(\xi_m)+G(\xi_n)\right)
\\
    &=& 2\left(||\xi_m-x||^2+||L\xi_m||^2\right)
       + 2\left(||\xi_n-x||^2+||L \xi_n||^2\right)
\\
   &=& ||\xi_m+\xi_n-2x||^2 + ||\xi_m-\xi_n||^2
      + ||L(\xi_m+\xi_n)||^2
\\
    &&     + ||L(\xi_m-\xi_n)||^2.
\end{eqnarray*}
Therefore, as  $m,n\to\infty$, we have
\begin{eqnarray*}
   && ||\xi_m-\xi_n||^2  +  ||L(\xi_m-\xi_n)||^2
\\
   &=& 2 \left( G(\xi_m)+G(\xi_n)\right) - 4 G\left(  \frac{\xi_m+\xi_n}{2}   \right)
  \to 0.
\end{eqnarray*}
It follows that
\[
  \lim_{m,n\to\infty} ||\xi_m-\xi_n|| =0, \qquad
  \lim_{m,n\to\infty} ||L\xi_m-L\xi_n|| =0.
\]
Therefore, the sequence $\xi_n$ converges in norm to an element $\xi\in H_0$, and the sequence
 $L \xi_n$ converges in norm to an element $g\in H$. By the definition of the closed operator we have
 $\xi\in\DD(L)$ and $g=L\xi$. Moreover, we have
 \[
   \lim_{n\to\infty} ||L\xi_n||= ||L\xi||.
 \]
Finally, we obtain
\[
  \sigma^2 =\lim_{n\to\infty} G(\xi_n) = G(\xi), \qquad \xi\in H_0\cap \DD(L),
\]
as required.
\end{proof}

%%%%%%%%%%

\begin{proof} [ of Proposition \ref{p:uniq}]
Assume that $\xi_1$ and $\xi_2$ are two distinct solutions
of the problem \eqref{A1}, i.e.
\[
   G(\xi_1)=G(\xi_2)= \sigma^2.
\]
By using the parallelogram identity \eqref{para}, we have
\begin{eqnarray*}
    && G\left( \frac{\xi_1+\xi_2}{2}\right)
\\  &=& \frac 12 \left(G(\xi_1)+G(\xi_2)\right)
  - \left\| \frac{\xi_1-\xi_2}{2} \right\|^2
  - \left\| \frac{L\xi_1-L\xi_2}{2} \right\|^2
\\
  &\le& \sigma^2 - \frac 14 \ \left\| \xi_1-\xi_2 \right\|^2
  <\sigma^2,
\end{eqnarray*}
which is impossible. The contradiction proves the proposition.
\end{proof}

%%%%%%%%

\begin{proof}[ of Proposition \ref{p:LL}]
Let $L^*$ be the operator adjoint to $L$. Since $L$ is a closed
operator with the dense domain,  $L^*L$ is a self-adjoint
non-negative operator, cf.\,\cite[Chapter IV]{AG}. Therefore,
$(I+L^*L)^{-1}$ is a bounded self-adjoint operator. By letting
$A:=I+L^*L$, we have
\begin{eqnarray*}
  && ||y-x||^2+||Ly||^2 = ||x||^2-(x,y)-(y,x) +(Ay,y)
\\
  &=&     ||x||^2 + ||A^{1/2} y-A^{-1/2} x||^2 - ||A^{-1/2} x||^2
\\
   &\ge& ||x||^2 - ||A^{-1/2} x||^2,
\end{eqnarray*}
and the equality is attained iff $y=\xi=A^{-1}x$.
\end{proof}

%%%%%%%%

\begin{proof}[ of Proposition \ref{p:euler}]
Let $\xi$ be a solution of Problem \eqref{A1}. Then for every
$Y\in H_0\cap \DD(L)$ we have
\[
   m:=||\xi-x||^2+||L\xi||^2\le ||Y-x||^2+||L Y||^2.
\]
Fix an arbitrary $h\in H_0\cap \DD(L)$; then $\xi+\eps h\in H_0\cap \DD(L)$
for all real $\eps$. We have
\begin{eqnarray*}
  m &\le& ||\xi+\eps h-x||^2+||L(\xi+\eps h)||^2
\\
   &=&  m + 2 \eps \left[ \Re(\xi-x,h) + \Re(L \xi, L h) \right]
\\
   &&  + \eps^2 \left[ ||h||^2+||L h||^2 \right].
\end{eqnarray*}
It follows that
\[
    \Re(\xi-x,h) + \Re(L \xi, L h) =0.
\]
By replacing $\eps$ with $i\eps$ we obtain
\[
    \Im(\xi-x,h) + \Im(L \xi, L h) =0.
\]
By adding up two equalities we arrive at \eqref{euler}.

We establish now the uniqueness of the solution for \eqref{euler}.
Let $\xi_1,\xi_2\in H_0\cap \DD(L)$ satisfy equalities
\begin{eqnarray*}
    && (\xi_1-x,h)+( L\xi_1,L h)=0,
\\
    && (\xi_2-x,h)+( L\xi_2,L h)=0,
\end{eqnarray*}
for all $h\in  H_0\cap \DD(L)$. It follows that
\[
  (\xi_1-\xi_2,h)+( L(\xi_1-\xi_2),L h)=0.
\]
By plugging $h=\xi_1-\xi_2$ into this equality, we arrive at
\[
   ||\xi_1-\xi_2||^2+ ||L(\xi_1-\xi_2)||^2=0,
\]
hence, $\xi_1=\xi_2$.
\end{proof}

\section*{Acknowledgements} M.A. Lifshits work was supported by RFBR grant
16-01-00258.
%%%%%%%%%%%%%%%%%%%%%%%%%%%%%%%%%%%%%%%%%%%%%%%%%%%%


\begin{thebibliography}{99}

{\baselineskip=10pt

\bibitem{Akh} Akhiezer, N.I. Lectures on Approximation Theory.
Nauka, 1965.

\bibitem{AG} Akhiezer, N.I., Glazman, I.M.
Theory of Linear Operators in Hilbert Space.  Dover, 1993.

\bibitem{Ber} Bernstein, S.N. Sur une propri\'et\'e de fonctions enti\`eres.
Comptes Rendus Acad. Sci. Paris, t.176, 1603--1605 (1923).

%%\bibitem{DMK}
%%Dym, H., McKean, H.P.  Gaussian Processes, Function Theory, and the Inverse Spectral Problem.
%%Dover Publications (2008)

\bibitem{Doob1} Doob, J.L.
Stochastic Processes. Wiley, N.Y. (1990).

\bibitem{Doob2} Doob, J.L.
Time series and harmonic analysis. In: Proc. Berkeley Sympos. Math. Statist. Probab.,
303--344, Berkeley--Los Angeles, University of California Press, 1949.

\bibitem{GS} Grenander, U., Szeg\"o, G.
Toeplitz Forms and Their Applications. AMS Chelsea Publ. (1998).

\bibitem{Hof} Hoffman, K.
Banach Spaces of Analytic Functions, Prentice-Hall, Englewood Cliffs, N.J. (1962).


\bibitem{KajTaq}
Kaj, I., Taqqu, M. S. Convergence to fractional Brownian motion and to the Telecom process:
the integral representation approach. In: In and Out of Equilibrium 2.
Ed. V. Sidoravicius and M. E. Vares. Ser: Progress in Probability {\bf 60},
Basel: Birkh\"auser Verlag, 2008, pp. 383--427.

\bibitem{KabLi}
Kabluchko, Z., Lifshits M.A. Least energy approximation for processes
with stationary increments. Preprint {\tt arxiv 1506.08369}. To appear in
J. Theor. Probab.

\bibitem{Kolm} Kolmogorov A.N.
Interpolation and extrapolation of stationary random sequences. In:
Selected Works of A.N. Kolmogorov, v.II, Probability and mathematical Statistics.
Kluwer, Dordreht, 1992. Original Russian edition: Izvestia AN SSSR. Ser. Math.,
v.5, pp. 3-14, 1941.

%%\bibitem{GS} Gel'fand, I.M., Shilov, G.E.
%%Generalized Functions, vol. I: Properties and operations. Academic Press,
%%New York--London, 1964.
%% xviii+423 pp.

\bibitem{RPE} Lifshits, M. Random Processes by Example, World Scientific, Singapore,
2015.

%%\bibitem{LS}
%%Lifshits, M., Setterqvist, E.  Energy of taut string
%%accompanying Wiener process, Stoch. Proc. Appl., 125, 401--427 (2015).

\bibitem{PW} Paley, R.E.A.C., Wiener, N.
Fourier Transformations in the Complex Domain, AMS, Providence,
1934.

\bibitem{Roz} Rozanov Yu.A.
Stationary Random Processes, Holden--Day, 1967. Original Russian edition:
Nauka, Moscow, 1963.

\bibitem{Ru} Rudin, W. Real and Complex Analysis. Third Edition. Mc.Graw--Hill, 1987.

\bibitem{Yagl} Yaglom, A.M.  An Introduction to the Theory of Stationary
Random Functions. Revised English edition.
%%originally published by Prentice Hall Inc., Englewood Cliffs, N.J. (1962)
Dover, New York, 2004.

\bibitem{W} Wiener, N.
Extrapolation, Interpolation, and Smoothing of Stationary Time Series,
Technology Press of MIT, Cambridge, 1949.
}

\end{thebibliography}
\end{document}